\def\graybox(#1,#2){
\x=#1 \y=#2 
\z=\x \t=\y
\advance\z by 10 
\advance\t by 10 
\psframe[fillstyle=solid,fillcolor=lightgray,linewidth=0pt](\x,\y)(\z,\t) 
\psline[linewidth=.5pt](\x,\y)(\x,\t)(\z,\t)(\z,\y)(\x,\y)}
\def\emptygraybox(#1,#2){
\x=#1 \y=#2 
\z=\x \t=\y
\advance\z by 10 
\advance\t by 10 
\psframe[fillstyle=solid,fillcolor=lightgray,linewidth=0pt,linecolor=lightgray](\x,\y)(\z,\t)}
\def\blankbox(#1,#2){
\x=#1 \y=#2 
\z=\x \t=\y
\advance\z by 10 
\advance\t by 10 
\psframe[linewidth=.5pt](\x,\y)(\z,\t)}
\def\whitebox(#1,#2){
\x=#1 \y=#2 
\z=\x \t=\y
\advance\z by 10 
\advance\t by 10 
\psframe[fillstyle=solid,fillcolor=white,linewidth=0pt](\x,\y)(\z,\t) 
\psline[linewidth=.5pt](\x,\y)(\x,\t)(\z,\t)(\z,\y)(\x,\y)}
\def\whiteboxb(#1,#2){
\x=#1 \y=#2 
\z=\x \t=\y
\advance\z by 10 
\advance\t by 10 
\psframe[fillstyle=solid,fillcolor=white,linewidth=0pt](\x,\y)(\z,\t)}
\newcommand{\define}{\textbf}
\newcommand{\excise}[1]{}
\renewcommand{\setminus}{\smallsetminus}
\renewcommand{\phi}{\varphi}
\renewcommand{\tilde}{\widetilde}
\renewcommand{\bar}{\overline}
\newcommand{\C}{\mathbb{C}}
\newcommand{\Ess}{\mathscr{E}\hspace{-.4ex}ss}
\newcommand{\RWY}{\mathcal{E}_\mathrm{RWY}}
\newcommand{\Max}{\mathscr{M}}
\newcommand{\ee}{\epsilon}
\DeclareMathOperator{\Null}{nullity}
\newtheorem{theorem}{Theorem}[section]
\newtheorem{lemma}[theorem]{Lemma}
\newtheorem{proposition}[theorem]{Proposition}
\newtheorem{corollary}[theorem]{Corollary}
\newtheorem*{thm*}{Theorem}
\newtheorem*{lem*}{Lemma}
\newtheorem*{prop*}{Proposition}
\newtheorem*{cor*}{Corollary}
\newtheorem*{thmA}{Theorem~A}
\newtheorem*{thmB}{Theorem~B}
\theoremstyle{definition}
\newtheorem{definition}[theorem]{Definition}
\newtheorem{remark}[theorem]{Remark}
\newtheorem{example}[theorem]{Example}
\newtheorem*{defn*}{Definition}
\newtheorem*{rmk*}{Remark}
\begin{document}

\title[Diagrams and essential sets for signed permutations]{Diagrams and essential sets \\ for signed permutations}

\date{July 21, 2016}

\author{David Anderson}
\address{Department of Mathematics, The Ohio State University, Columbus, OH 43210}
\email{anderson.2804@math.osu.edu}

\thanks{This work was partially supported by NSF Grant DMS-1502201 and a postdoctoral fellowship from the Instituto Nacional de Matem\'atica Pura e Aplicada (IMPA)}

\begin{abstract}
We introduce diagrams and essential sets for signed permutations, extending the analogous notions for ordinary permutations.  In particular, we show that the essential set provides a minimal list of rank conditions defining the Schubert variety or degeneracy locus corresponding to a signed permutation.  Our essential set is in bijection with the poset-theoretic version defined by Reiner, Woo, and Yong, and thus gives an explicit, diagrammatic method for computing the latter.
\end{abstract}

\maketitle

\section*{Introduction}

Representing a permutation in $S_n$ as a matrix of dots in an $n\times n$ array of boxes, its {\em (Rothe) diagram} is the subset of boxes that remain after striking out the boxes (weakly) south or east of each dot.  The boxes of the diagram correspond naturally to the inversions of the permutation, so the number of them is equal to the length of the permutation.  In fact, the diagram is a convenient way of encoding a great deal of information about a permutation.  The goal of this article is to develop an analogous tool for signed permutations.

A permutation $v\in S_n$ determines $n^2$ rank conditions on the set of all $n\times n$ matrices, by requiring that each lower-left submatrix have rank at least that of the corresponding part of the permutation matrix for $v$; more generally, $v$ determines a degeneracy locus for a flagged vector bundle on a variety, by imposing this condition on each fiber.  These rank conditions are highly redundant, however.  A minimal list of non-redundant rank conditions is determined by the {\em essential set} of $v$, introduced in \cite{fulton}.  It can be read easily from the diagram of $v$: the definition of the essential set identifies it with the set of southeast corners of the diagram.

Seeking formulas for degeneracy loci of other classical types \cite{af0,af1}, we were motivated to find minimal lists of rank conditions corresponding to {\em signed permutations}, since these index degeneracy loci in other types.  For example, in the type B setting of an odd-rank vector bundle equipped with a nondegenerate quadratic form, a degeneracy locus can be represented (locally) by a $(2n+1)\times n$ matrix whose columns are required to be isotropic and mutually orthogonal.  A signed permutation $w$ specifies $2n^2+n$ rank conditions on such a matrix, most of which are redundant; we shall find a minimal {\em essential set} of rank conditions which suffice, and can be read easily from the {\em diagram} of $w$, which are introduced here.

All the statements and most of the arguments in the body of this paper will be combinatorial, but it will be useful to explain some of this geometric motivation in more detail first.  
Consider an odd-rank vector bundle $V$ on a variety $X$, equipped with a nondegenerate quadratic form and flags of subbundles $V \supset E_1 \supset E_2 \supset \cdots$ and $V\supset F_1 \supset F_2 \supset \cdots$, with $E_1$ and $F_1$ maximal isotropic.  A signed permutation $w$ defines a degeneracy locus $\Omega_w\subseteq X$ by imposing certain rank conditions $\dim(E_p\cap F_q)\geq k$.  Here $k=r_w(p,q)$ is the value of a {\em rank function} associated with $w$.

To describe the rank conditions, it is enough to consider the special case where $X$ is the odd orthogonal flag variety, and $\Omega_w$ is a Schubert variety of codimension equal to the length of $w$.\footnote{In fact, this case is all we need for the purposes of this article; the reader may freely substitute ``Schubert variety'' for ``degeneracy locus'', or vice versa, according to taste.}  Let us assume the flag $F_\bullet$ comes from a standard basis $e_n,\ldots,e_1, e_0, e_{\bar{1}}, \ldots, e_{\bar{n}}$ (using $\bar{q}$ to denote $-q$).  That is, extending to a complete flag by setting $F_0=F_1^\perp$, $F_{\bar{1}}=F_2^\perp$, etc. (so $F_{\bar{q}} = F_{q+1}^\perp$), the space $F_q$ is spanned by $\{e_i\,|\, i\geq q\}$ for all $q$.  An isotropic flag $E_\bullet$ can be represented by a $(2n+1)\times n$ matrix whose rows correspond to the basis vectors $e_i$: labelling the columns $\bar{n},\ldots,\bar{1}$, the space $E_p$ is the span of the columns whose labels are at most $\bar{p}$.  One can always scale the bottom-most nonzero entry in each column to $1$; the remaining entries are either free or determined from the other entries by the isotropicity condition.  If the $1$'s are in positions $(\bar{w(i)},\bar{\imath})$ for $i=1,\ldots,n$, then $\dim(E_p\cap F_q) = \#\{i\leq \bar{p} \,|\, w(i)\geq q\}$, and this defines the rank function $r_w(p,q)$.  The Schubert variety $\Omega_w$ is defined as the set of all $E_\bullet$ such that $\dim(E_p\cap F_q) \geq r_w(p,q)$ for $1\leq p\leq n$ and $\bar{n}\leq q\leq n$ (or, more precisely, by taking the closure of the locus where equality holds).  An example is shown in Figure~\ref{f.diagram-ex1}, where the entries determined by isotropicity are represented by an ``$\times$''; cf.~\cite[\S6.1]{fp}.

\begin{figure}[ht]
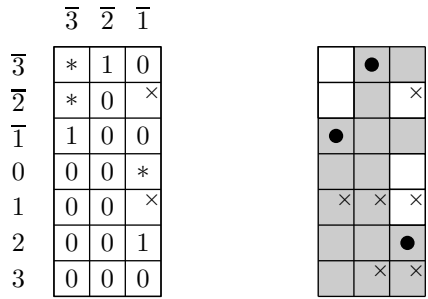

\pspicture(50,60)(-50,-50)

\psset{unit=1.35pt}

\pspolygon(-35,35)(-5,35)(-5,-35)(-35,-35)(-35,35)

\psline{-}(-5,35)(-35,35)
\psline{-}(-5,25)(-35,25)
\psline{-}(-5,15)(-35,15)
\psline{-}(-5,5)(-35,5)
\psline{-}(-5,-5)(-35,-5)
\psline{-}(-5,-15)(-35,-15)
\psline{-}(-5,-25)(-35,-25)
\psline{-}(-5,-35)(-35,-35)

\psline{-}(-5,35)(-5,-35)
\psline{-}(-15,35)(-15,-35)
\psline{-}(-25,35)(-25,-35)
\psline{-}(-35,35)(-35,-35)

\footnotesize{
\rput(-45,30){$\bar{3}$}
\rput(-45,20){$\bar{2}$}
\rput(-45,10){$\bar{1}$}
\rput(-45,0){$0$}
\rput(-45,-10){$1$}
\rput(-45,-20){$2$}
\rput(-45,-30){$3$}

\rput[b](-30,40){$\bar{3}$}
\rput[b](-20,40){$\bar{2}$}
\rput[b](-10,40){$\bar{1}$}
}

\rput(-30,30){$*$}
\rput(-30,20){$*$}
\rput(-30,10){\footnotesize{$1$}}
\rput(-30,0){\footnotesize{$0$}}
\rput(-30,-10){\footnotesize{$0$}}
\rput(-30,-20){\footnotesize{$0$}}
\rput(-30,-30){\footnotesize{$0$}}

\rput(-20,30){\footnotesize{$1$}}
\rput(-20,20){\footnotesize{$0$}}
\rput(-20,10){\footnotesize{$0$}}
\rput(-20,0){\footnotesize{$0$}}
\rput(-20,-10){\footnotesize{$0$}}
\rput(-20,-20){\footnotesize{$0$}}
\rput(-20,-30){\footnotesize{$0$}}

\rput(-10,30){\footnotesize{$0$}}
\rput[bl](-10,20){\tiny{$\times$}}
\rput(-10,10){\footnotesize{$0$}}
\rput(-10,0){$*$}
\rput[bl](-10,-10){\tiny{$\times$}}
\rput(-10,-20){\footnotesize{$1$}}
\rput(-10,-30){\footnotesize{$0$}}

\endpspicture
\pspicture(50,60)(-50,-50)

\psset{unit=1.35pt}

\pspolygon[fillstyle=solid,fillcolor=lightgray,linecolor=lightgray](-35,35)(-5,35)(-5,-35)(-35,-35)(-35,35)

\psline{-}(-5,35)(-35,35)
\psline{-}(-5,25)(-35,25)
\psline{-}(-5,15)(-35,15)
\psline{-}(-5,5)(-35,5)
\psline{-}(-5,-5)(-35,-5)
\psline{-}(-5,-15)(-35,-15)
\psline{-}(-5,-25)(-35,-25)
\psline{-}(-5,-35)(-35,-35)

\psline{-}(-5,35)(-5,-35)
\psline{-}(-15,35)(-15,-35)
\psline{-}(-25,35)(-25,-35)
\psline{-}(-35,35)(-35,-35)

\pscircle*(-30,10){2}
\pscircle*(-20,30){2}
\pscircle*(-10,-20){2}

\whitebox(-35,15)
\whitebox(-35,25)
\whitebox(-15,15)
\whitebox(-15,-5)
\whitebox(-15,-15)

\rput[bl](-30,-10){\tiny{$\times$}}
\rput[bl](-20,-10){\tiny{$\times$}}
\rput[bl](-10,-10){\tiny{$\times$}}
\rput[bl](-20,-30){\tiny{$\times$}}
\rput[bl](-10,-30){\tiny{$\times$}}
\rput[bl](-10,20){\tiny{$\times$}}

\endpspicture

\caption{Diagram of the signed permutation $\bar{2}\;3\;1$.\label{f.diagram-ex1}}
\end{figure}

The diagram of a signed permutation $w$ is a combinatorial abstraction of the matrix representing $E_\bullet$ (see Figures~\ref{f.diagram-ex1} and \ref{f.ex-intro}).  Its definition, given in detail in \S\ref{ss.essential-signed}, is similar to the diagram of an ordinary permutation, with the additional feature of markings ``$\times$''; the white boxes not containing an $\times$ form the {\it diagram}, while all the white boxes form the {\it extended diagram}.

In terms of the matrix, the rank $r_w(p,q)$ is equal to the number of dots in the region weakly southwest of the box $(q,\bar{p})$.  The essential set $\Ess(w)$ consists of certain {\it basic triples} $(k,p,q)$, where $k=r_w(p,q)$ and the box $(q-1,\bar{p})$ is a southeast corner of the (extended) diagram of $w$.  In contrast with ordinary permutations, however, not all southeast corners give essential conditions for a signed permutation.  There are two exceptions, stemming from two simple linear-algebraic facts about isotropic subspaces: first,
 the conditions $\dim(E_p\cap F_q) \geq k$ and $\dim(E_p^\perp \cap F_q^\perp) \geq k+p+q-1$ are equivalent; and second, for $k,p,q>0$ 
the condition $\dim(E_p\cap F_q)\geq k$ is implied by $\dim(E_p\cap F_q^\perp) \geq k+q-1$. 
An example is shown in Figure~\ref{f.ex-intro}.    
See Definition~\ref{d.essential} for details.

\begin{figure}[ht]
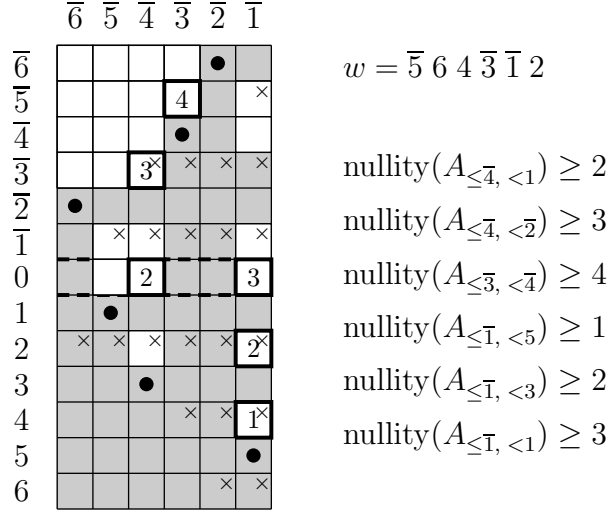

\pspicture(70,110)(-40,-90)

\psset{unit=1.35pt}

\pspolygon[fillstyle=solid,fillcolor=lightgray,linecolor=lightgray](-65,65)(-5,65)(-5,-65)(-65,-65)(-65,65)

\psline{-}(-5,65)(-65,65)
\psline{-}(-5,55)(-65,55)
\psline{-}(-5,45)(-65,45)
\psline{-}(-5,35)(-65,35)
\psline{-}(-5,25)(-65,25)
\psline{-}(-5,15)(-65,15)
\psline{-}(-5,5)(-65,5)
\psline{-}(-5,-5)(-65,-5)
\psline{-}(-5,-15)(-65,-15)
\psline{-}(-5,-25)(-65,-25)
\psline{-}(-5,-35)(-65,-35)
\psline{-}(-5,-45)(-65,-45)
\psline{-}(-5,-55)(-65,-55)
\psline{-}(-5,-65)(-65,-65)

\psline{-}(-5,65)(-5,-65)
\psline{-}(-15,65)(-15,-65)
\psline{-}(-25,65)(-25,-65)
\psline{-}(-35,65)(-35,-65)
\psline{-}(-45,65)(-45,-65)
\psline{-}(-55,65)(-55,-65)
\psline{-}(-65,65)(-65,-65)

\rput(-75,60){$\bar{6}$}
\rput(-75,50){$\bar{5}$}
\rput(-75,40){$\bar{4}$}
\rput(-75,30){$\bar{3}$}
\rput(-75,20){$\bar{2}$}
\rput(-75,10){$\bar{1}$}
\rput(-75,0){$0$}
\rput(-75,-10){$1$}
\rput(-75,-20){$2$}
\rput(-75,-30){$3$}
\rput(-75,-40){$4$}
\rput(-75,-50){$5$}
\rput(-75,-60){$6$}

\rput[b](-60,70){$\bar{6}$}
\rput[b](-50,70){$\bar{5}$}
\rput[b](-40,70){$\bar{4}$}
\rput[b](-30,70){$\bar{3}$}
\rput[b](-20,70){$\bar{2}$}
\rput[b](-10,70){$\bar{1}$}

\psline[linestyle=dashed,linewidth=1.5pt]{-}(-5,5)(-65,5)
\psline[linestyle=dashed,linewidth=1.5pt]{-}(-5,-5)(-65,-5)

\pscircle*(-10,-50){2}
\pscircle*(-20,60){2}
\pscircle*(-30,40){2}
\pscircle*(-40,-30){2}
\pscircle*(-50,-10){2}
\pscircle*(-60,20){2}

\whitebox(-65,55)
\whitebox(-65,45)
\whitebox(-65,35)
\whitebox(-65,25)

\whitebox(-55,55)
\whitebox(-55,45)
\whitebox(-55,35)
\whitebox(-55,25)
\whitebox(-55,5)
\whitebox(-55,-5)

\whitebox(-45,55)
\whitebox(-45,45)
\whitebox(-45,35)
\whitebox(-45,25)
\whitebox(-45,5)
\whitebox(-45,-5)
\whitebox(-45,-25)

\whitebox(-35,55)
\whitebox(-35,45)

\whitebox(-15,45)
\whitebox(-15,35)
\whitebox(-15,5)
\whitebox(-15,-5)
\whitebox(-15,-25)
\whitebox(-15,-45)

\rput[bl](-60,-20){\tiny{$\times$}}
\rput[bl](-50,-20){\tiny{$\times$}}
\rput[bl](-40,-20){\tiny{$\times$}}
\rput[bl](-30,-20){\tiny{$\times$}}
\rput[bl](-20,-20){\tiny{$\times$}}
\rput[bl](-10,-20){\tiny{$\times$}}

\rput[bl](-50,10){\tiny{$\times$}}
\rput[bl](-40,10){\tiny{$\times$}}
\rput[bl](-30,10){\tiny{$\times$}}
\rput[bl](-20,10){\tiny{$\times$}}
\rput[bl](-10,10){\tiny{$\times$}}

\rput[bl](-40,30){\tiny{$\times$}}
\rput[bl](-30,30){\tiny{$\times$}}
\rput[bl](-20,30){\tiny{$\times$}}
\rput[bl](-10,30){\tiny{$\times$}}

\rput[bl](-30,-40){\tiny{$\times$}}
\rput[bl](-20,-40){\tiny{$\times$}}
\rput[bl](-10,-40){\tiny{$\times$}}

\rput[bl](-20,-60){\tiny{$\times$}}
\rput[bl](-10,-60){\tiny{$\times$}}

\rput[bl](-10,50){\tiny{$\times$}}

\psline[linewidth=1.5pt]{-}(-45,-5)(-45,5)(-35,5)(-35,-5)(-45,-5)
\rput(-40,0){\footnotesize{$2$}}

\psline[linewidth=1.5pt]{-}(-45,25)(-45,35)(-35,35)(-35,25)(-45,25)
\rput(-40,30){\footnotesize{$3$}}

\psline[linewidth=1.5pt]{-}(-35,45)(-35,55)(-25,55)(-25,45)(-35,45)
\rput(-30,50){\footnotesize{$4$}}

\psline[linewidth=1.5pt]{-}(-15,-45)(-15,-35)(-5,-35)(-5,-45)(-15,-45)
\rput(-10,-40){\footnotesize{$1$}}

\psline[linewidth=1.5pt]{-}(-15,-25)(-15,-15)(-5,-15)(-5,-25)(-15,-25)
\rput(-10,-20){\footnotesize{$2$}}

\psline[linewidth=1.5pt]{-}(-15,-5)(-15,5)(-5,5)(-5,-5)(-15,-5)
\rput(-10,0){\footnotesize{$3$}}

\rput[l](15,60){$w=\bar{5}\;6\;4\;\bar{3}\;\bar{1}\;2$}
\rput[l](15,30){$\Null(A_{\leq\bar{4},\;<1}) \geq 2$}
\rput[l](15,15){$\Null(A_{\leq\bar{4},\;<\bar{2}}) \geq 3$}
\rput[l](15,0){$\Null(A_{\leq\bar{3},\;<\bar{4}}) \geq 4$}
\rput[l](15,-15){$\Null(A_{\leq\bar{1},\;<5}) \geq 1$}
\rput[l](15,-30){$\Null(A_{\leq\bar{1},\;<3}) \geq 2$}
\rput[l](15,-45){$\Null(A_{\leq\bar{1},\;<1}) \geq 3$}

\endpspicture

\caption{Diagram and essential rank conditions.  In the diagram, each essential position is labelled with the number of dots strictly south and weakly west of it.  \label{f.ex-intro}}
\end{figure}

The key geometric fact about essential sets is the following:
\begin{thmA}[Corollary~\ref{c.essential-b}]
The degeneracy locus corresponding to a signed permutation $w$ is defined by the conditions $\dim(E_p \cap F_q) \geq k$, as $(k,p,q)$ ranges over $\Ess(w)$.
\end{thmA}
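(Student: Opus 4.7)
The plan is to reduce Theorem~A to the combinatorial claim that every rank condition $\dim(E_p\cap F_q)\geq r_w(p,q)$ is a consequence of the subset indexed by $\Ess(w)$. By the definition of $\Omega_w$ as the locus where all these conditions hold, the containment $\Omega_w\subseteq \{E_\bullet : \dim(E_p\cap F_q)\geq k \text{ for } (k,p,q)\in\Ess(w)\}$ is automatic, so the content of Theorem~A is the reverse containment---that is, the implication.

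Stage~1 of the reduction mirrors Fulton's original type~A argument. Whenever $p'\leq p$ and $q'\leq q$, the flag inclusions give $\dim(E_{p'}\cap F_{q'})\geq \dim(E_p\cap F_q)$, so a rank condition is automatically implied by any condition to its south or east with an equal or larger rank value. Using this I would argue that every basic triple $(r_w(p,q),p,q)$ reduces, via south/east moves through the extended diagram, to a triple attached to a southeast corner of the extended diagram: at a non-corner box of the extended diagram, an east or south neighbor still lies in the extended diagram with $r_w$ at least as large.

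Stage~2 then eliminates the SE corners that $\Ess(w)$ explicitly omits, using the two isotropy-based relations from the introduction. The first is the equivalence
\[
\dim(E_p\cap F_q)\geq k \;\Longleftrightarrow\; \dim(E_p^\perp\cap F_q^\perp)\geq k+p+q-1,
\]
which follows from $\dim(E_p+F_q)=\dim E_p+\dim F_q-\dim(E_p\cap F_q)$ combined with $(E_p+F_q)^\perp=E_p^\perp\cap F_q^\perp$ and the dimensions of $E_p$, $F_q$. The second is the implication
\[
\dim(E_p\cap F_q^\perp)\geq k+q-1 \;\Longrightarrow\; \dim(E_p\cap F_q)\geq k \qquad (k,p,q>0),
\]
which follows because $F_q^\perp/F_q$ carries a nondegenerate induced form of rank $2q-1$, and the image of the isotropic subspace $E_p\cap F_q^\perp$ in this quotient has dimension at most $q-1$. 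For each SE corner whose associated triple is not in $\Ess(w)$, I would verify that the partner triple under the appropriate relation \emph{is} in $\Ess(w)$, so the omitted condition follows from an essential one.

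The main obstacle is the bookkeeping in Stage~2: one must confirm that the combinatorial exclusion rules in Definition~\ref{d.essential} align exactly with the redundancies produced by the two isotropy identities. Concretely, the partner of any excluded SE corner must itself be a SE corner lying in $\Ess(w)$---not a non-corner or a second excluded corner---and its rank value must be precisely the value demanded by the identity, so that the chain of implications closes. This requires a careful case analysis of how the $\times$-marked boxes and the row and column labelled $0$ behave under the involutions implicit in $E_p\leftrightarrow E_p^\perp$ and $F_q\leftrightarrow F_q^\perp$, and this is where most of the work underlying Corollary~\ref{c.essential-b} takes place.
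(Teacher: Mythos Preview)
Your approach is sound but takes a different route from the paper's.  The paper does not carry out a direct reduction of rank conditions via the two isotropy identities; instead it first proves Theorem~B, identifying $\Ess(w)$ with the set of maximal basic signed permutations $w(k,p,q)$ below $w$ in Bruhat order, and then deduces Theorem~A from Lemma~\ref{l.unique} (the single condition $C(k,p,q)$ cuts out exactly $\Omega_{w(k,p,q)}$) via the standard dictionary between Bruhat order and Schubert containment: $\Omega_w=\bigcap_{w(k,p,q)\leq w}\Omega_{w(k,p,q)}$, and intersecting over the maximal such $w(k,p,q)$ suffices.  The bookkeeping you anticipate in Stage~2 is instead absorbed into Lemma~\ref{l.implies}, which classifies exactly when $w(k',p',q')\geq w(k,p,q)$ in $W_\infty$ while $v(k',p',q')\not\geq v(k,p,q)$ in $S_\infty$.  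Your direct route is more elementary and self-contained; the paper's route additionally yields the minimality clause of Corollary~\ref{c.essential-b} (which your reduction does not address) and the connection to the base of $W_n$.

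One point in your Stage~2 needs adjustment.  Exception~(ii) works as you describe: the partner corner at $(\bar q,\bar p)$ is genuinely in $\Ess(w)$ and the second identity applies directly.  For exception~(i) ($p=1$, $q<0$), however, identity~(1) lands on $E_1^\perp$, which is not one of the $E_p$ with $p\geq 1$; you must combine it with the inclusion $E_1\subset E_1^\perp$ to reach a partner triple $(k+q,\,1,\,1-q)$.  This partner has the correct rank value $r_w(1,1-q)=k+q$, but it need not itself be a SE corner, so a further Stage~1 reduction down column~$\bar 1$ is required (and when $k+q=0$ the original condition is vacuous).  So your assertion that ``the partner \dots\ is in $\Ess(w)$'' is slightly too strong, though the overall strategy still closes once this is accounted for.
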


Taking $X$ to be the orthogonal flag variety, $E_\bullet$ the tautological isotropic flag, and $F_\bullet$ a flag of trivial isotropic bundles, Theorem~A says the Schubert variety corresponding to $w$ is determined by rank conditions from the essential set.  This particular case suffices to prove the general degeneracy locus statement.

The theorem can be interpreted in terms of rank conditions on matrices, as follows.  Take a standard basis $e_n,\ldots,e_1, e_0, e_{\bar{1}}, \ldots, e_{\bar{n}}$ for a vector space of dimension $2n+1$, and fix the symmetric bilinear form defined by $\langle e_i, e_{\bar\jmath} \rangle = \delta_{i,j}$.  Let $X$ be the set of $(2n+1)\times n$ full-rank matrices with isotropic and pairwise orthogonal columns, with rows indexed $\bar{n},\ldots,0,\ldots,n$ (top to bottom) and columns indexed $\bar{n},\ldots,\bar{1}$ (left to right).  

\begin{cor*}
Given a signed permutation $w$, with rank function $r_w(p,q)$ as defined above, let $\Omega_w\subseteq X$ be the subset
\[
  \Omega_w = \{ A \in X \,|\, \Null (A_{\leq \bar{p},\; < q}) \geq r_w(p,q) \text{ for all } 1\leq p\leq n, \bar{n} \leq q\leq n \},
\]
where $A_{\leq \bar{p},\; < q}$ is the (upper-left) submatrix on columns $\bar{n},\ldots,\bar{p}$ and rows $\bar{n},\ldots,q-1$.  Then the conditions
\[
  \Null (A_{\leq \bar{p},\; < q}) \geq k  \; \text{ for }\; (k,p,q) \in \Ess(w)
\]
suffice to determine $\Omega_w$, and they form a minimal set with this property.
\end{cor*}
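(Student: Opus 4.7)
The plan is to deduce the corollary as a direct translation of Theorem~A via the natural bundle map $\pi: X \to OF$ to the orthogonal (type B) flag variety.

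Set up the bundle first. Sending $A \in X$ to the partial isotropic flag $E_\bullet$, where $E_p$ is the span of the columns of $A$ indexed $\bar n,\ldots,\bar p$, defines a surjection $\pi$ with irreducible fibers: the fibers are orbits of the Borel $B \subset GL_n$ of column operations preserving the nested spans $E_1 \supset E_2 \supset \cdots \supset E_n$, and the isotropy and orthogonality of the columns are preserved under any such operation. Fixing the reference flag $F_q = \Span\{e_i : i \geq q\}$, a vector $\sum_i c_i e_i \in E_p$ lies in $F_q$ iff $c_i = 0$ for all $i < q$, so
\[
\dim(E_p \cap F_q) \;=\; \Null(A_{\leq \bar p,\; < q}).
\]
Consequently, for every subset $S \subseteq \Ess(w)$, the locus $Y(S) \subseteq X$ cut out by the nullity inequalities indexed by $S$ equals $\pi^{-1}(Z(S))$, where $Z(S) \subseteq OF$ is the locus cut out by the corresponding rank inequalities in the $(E_\bullet,F_\bullet)$-formulation.

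Sufficiency follows at once: taking $S = \Ess(w)$, Theorem~A gives $Z(\Ess(w)) = \Omega_w \subseteq OF$, and hence $Y(\Ess(w)) = \pi^{-1}(\Omega_w) = \Omega_w \subseteq X$. For minimality, suppose some $S \subsetneq \Ess(w)$ satisfies $Y(S) = \Omega_w$ in $X$. Since $\pi$ is surjective with connected fibers, $Z(S) = \pi(Y(S)) = \pi(\Omega_w) = \Omega_w$ in $OF$, contradicting the minimality half of the essential-set theorem for signed Schubert varieties established in the body of the paper (the companion statement to Theorem~A). Hence every essential inequality is needed.

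The main obstacle lies not in the deduction above but in the minimality statement on $OF$ it invokes, which is where the combinatorial heart of the paper lives: one needs the diagrammatic analysis -- in particular the identification of essential boxes as a \emph{restricted} class of southeast corners, accounting for the two linear-algebraic redundancies $\dim(E_p^\perp \cap F_q^\perp) = \dim(E_p \cap F_q) + p + q - 1$ and $\dim(E_p \cap F_q^\perp) \geq k+q-1 \Rightarrow \dim(E_p \cap F_q) \geq k$ -- in order to rule out any further redundancies. Once that combinatorial input on $OF$ is in hand, the corollary is a formal consequence of the bundle translation.
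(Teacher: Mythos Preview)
Your proposal is correct and follows essentially the same route as the paper: represent the orthogonal flag variety as a quotient of $X$, identify $\Null(A_{\leq \bar p,\,<q})$ with $\dim(E_p\cap F_q)$ via the kernel of $E_p \hookrightarrow V \twoheadrightarrow V/F_q$, and pull back Theorem~A (together with the minimality in Corollary~\ref{c.essential-b}) along $\pi$. The paper's deduction is the same two observations compressed into one sentence; your version spells out the minimality transfer more explicitly, which is fine (though the ``connected fibers'' clause is unnecessary---surjectivity of $\pi$ and $Y(S)=\pi^{-1}(Z(S))$ already give $Z(S)=\pi(Y(S))$).
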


\noindent
To deduce this from Theorem~A, observe that the the flag variety $SO_{2n+1}/B$ can be represented as a quotient of the set of matrices $X$, and the composed map $E_p \hookrightarrow V \twoheadrightarrow V/F_q$ is represented by the submatrix $A_{\leq\bar{p},\; <q}$.  The set $\Omega_w$ is the preimage of the corresponding Schubert variety under the quotient map.  An example is shown in Figure~\ref{f.ex-intro}.

With Theorem~A in mind, we define the {\it basic signed permutation} $w(k,p,q)$ associated to a basic triple $(k,p,q)$ so that the Schubert variety $\Omega_{w(k,p,q)}$ is defined by a single rank condition (see \S\ref{ss.basic} for the combinatorial definition).  The geometric statement of Theorem~A is a direct consequence of a combinatorial statement about Bruhat order---a signed permutation $w$ is the supremum of the basic signed permutations corresponding to the elements of $\Ess(w)$ (Theorem~\ref{t.essential-b}).  This, in turn, relies on our main result about essential sets:
\begin{thmB}[Theorem~\ref{t.ess-diagram}]
The essential set of a signed permutation $w$ corresponds to the set of basic signed permutations which are maximal among all those below $w$ in Bruhat order.
\end{thmB}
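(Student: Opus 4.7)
The plan is to translate both sides of the equivalence into the language of rank functions and southeast corners of the diagram. First I would invoke the standard characterization of Bruhat order for signed permutations: $u \leq w$ if and only if $r_u(p,q) \leq r_w(p,q)$ for every $(p,q)$ in the relevant range. This is the analogue of the classical statement for $S_n$ and holds in all classical types, essentially because Schubert varieties are set-theoretically cut out by rank conditions and Bruhat order corresponds to inclusion. Since each basic signed permutation $w(k,p,q)$ is, by definition, characterized by a single rank condition $r(p,q) \geq k$ (together with its consequences), this yields the elementary equivalence
\[
 w(k,p,q) \leq w \quad\Longleftrightarrow\quad r_w(p,q) \geq k,
\]
reducing the theorem to a purely combinatorial comparison.

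Next I would analyze the partial order on basic triples inherited from Bruhat order on basic signed permutations. Given two such triples, $w(k,p,q) \leq w(k',p',q')$ precisely when the single rank condition indexed by $(k,p,q)$ is implied by the one indexed by $(k',p',q')$. In the ordinary permutation setting this is the familiar northwest comparison on positions with the appropriate offset in $k$. For signed permutations the same description holds, with two additional implications arising from isotropicity of the flags: the equivalence $r(p,q) \geq k \iff r(\text{perp of } p,\text{perp of } q) \geq k+p+q-1$, and the implication $r(p,F_q^\perp) \geq k+q-1 \Rightarrow r(p,q)\geq k$ for $k,p,q>0$. These two exceptions are exactly the features that distinguish the signed case from the ordinary one, and they are matched by the two exclusions built into the definition of $\Ess(w)$.

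With these dictionaries in hand, the proof becomes a direct verification in both directions. For the forward direction, suppose $(k,p,q)\in \Ess(w)$ and that $w(k,p,q) < w(k',p',q') \leq w$; by the comparison just described, the box $(q'-1,\bar{p'})$ must lie strictly to the southeast of $(q-1,\bar{p})$ in the diagram, or be obtained from it via one of the two perp-symmetries (with $k'$ correspondingly larger). The southeast-corner condition in the definition of $\Ess(w)$, together with its two exclusions, rules out precisely these possibilities, which contradicts $r_w(p',q')\geq k'$. For the converse, if $(k,p,q)$ is not essential, then either $(q-1,\bar{p})$ fails to be a southeast corner of the extended diagram (so a strictly southeast white box is available to form a bigger basic triple still below $w$), or the triple falls under one of the two exceptions (so applying the appropriate perp-symmetry produces a strictly larger basic permutation still below $w$).

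The main obstacle will be the careful bookkeeping around the two exceptional cases. The notion of southeast corner relevant here is not simply that of a minimal white box, but a modified one that accounts for the perp-symmetry of the extended diagram and for the automatic implication coming from the $\times$'s; likewise, the comparison $w(k,p,q) \leq w(k',p',q')$ must be understood up to these same symmetries. Making this dictionary precise — so that essential triples correspond on the nose, rather than merely up to a perp-involution, to maximal basic permutations below $w$ — is the combinatorial core of the argument. Once the dictionary is in place, both directions reduce to unwinding definitions.
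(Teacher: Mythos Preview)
Your outline is sound, but it takes a genuinely different route from the paper's proof. You propose to work intrinsically in $W_\infty$: use the rank-function characterization of Bruhat order to get $w(k,p,q)\leq w \iff r_w(p,q)\geq k$, then determine exactly when $w(k,p,q)\leq w(k',p',q')$ in $W_\infty$ (the type~A ``northwest'' comparison together with the two extra isotropic implications), and finally match this against the SE-corner definition with its two exceptions. The paper instead works through the embedding $\iota\colon W_n\hookrightarrow S_{2n+1}$: it invokes the known type~A theorem $\Max(\iota(w))=\Ess(\iota(w))$, shows that $\Max(w)$ is contained in the type~B basic triples appearing in $\Ess(\iota(w))$, and then isolates (via Lemmas~\ref{l.implies} and \ref{l.essential-redundant}) precisely when two such triples become comparable in $W_\infty$ while being incomparable in $S_\infty$. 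In effect, both arguments share the same technical core---the comparison of Bruhat order on basic elements in $W_\infty$ versus $S_\infty$, which is your ``two additional implications'' and the paper's Lemma~\ref{l.implies}---but the paper's reduction to type~A spares it from redoing the SE-corner $\Leftrightarrow$ maximal-bigrassmannian argument from scratch, while your approach is more self-contained and does not appeal to the $S_\infty$ result as a black box.

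One caution about your forward direction: the inequality $w(k,p,q)<w(k',p',q')$ does not force the box $(q'-1,\bar{p'})$ to be adjacent to $(q-1,\bar{p})$, so ``the SE-corner condition rules out these possibilities'' needs more than the local corner property. You will need either a monotone path argument (moving $(k',p',q')$ toward an SE corner while preserving both $w(k',p',q')\leq w$ and $w(k',p',q')>w(k,p,q)$), or---as the paper does---the observation that any such $(k',p',q')$ may be replaced by an element of $\Ess(\iota(w))$, after which the incomparability of distinct essential triples in $S_\infty$ does the work. This is exactly the ``careful bookkeeping'' you flag at the end, and it is where the paper's use of the type~A result pays off most.
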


Theorem~B also suggests a link with a general poset-theoretic definition of essential set introduced by Reiner, Woo, and Yong in the course of finding descriptions for the cohomology rings of Schubert varieties \cite{rwy}.  We establish a bijective correspondence in Proposition~\ref{p.rwy-equivalent}.  Along the way, we also show that the set of basic signed permutations is in bijection with the {\em base} of the group of signed permutations, which was studied by Lascoux and Sch\"utzenberger \cite{ls2} and Geck and Kim \cite{gk} in the context of characterizing Bruhat order of Weyl groups.

From an alternative point of view, our diagrammatic definition together with Theorem~B may be taken as an efficient method for computing the essential set as defined in \cite{rwy}, and hence finding generators for the ideal of the cohomology ring of a Schubert variety.  From the perspective of \cite{ls2} and \cite{gk}, this also leads to an efficient means of comparing elements in Bruhat order on signed permutations.

It would be interesting to see signed analogues of other properties of diagrams for (ordinary) permutations.  For instance, ``balanced fillings'' of the diagram of a permutation are in bijection with reduced words, and this leads to a formula for the corresponding Stanley symmetric function \cite{klr,fgrs}.  Is there a similar story for diagrams of signed permutations?  (In \cite{hamaker}, a different notion of diagram is proposed, as well as a set of balanced fillings in bijection with reduced words; the tradeoff is that one must label all the boxes of the diagram of a permutation in $S_{2n}$, rather than the smaller set of boxes we consider here.)  In a different direction, Eriksson and Linusson characterized the sets that can arise as essential sets of permutations, and used this to give an efficient algorithm for reconstructing a permutation from its essential set; in fact, only a subset of the essential set is required \cite{el}.  A result of this kind for signed permutations would be useful, as well.

Much of our discussion of diagrams and essential sets for signed permutations has a ``type B'' flavor, but this is primarily for notational convenience.  The group $W_n$ of signed permutations is also the Weyl group of type C, and indeed, one can recast all the results in a form more adapted to the symplectic group; an indication is given at the end of the article.  Type D, however, is another matter---the Weyl group is a subgroup of $W_n$, and it does not satisfy the ``dissective'' condition used in \S\ref{ss.bases}.  It would be interesting to work out the analogous story for type D, but this case seems sufficiently distinct to be treated separately.

\medskip
\noindent
{\it Acknowledgements.}  I am grateful to Sara Billey, Zach Hamaker, and Alexander Woo for helpful comments and suggestions.

The initial impetus for this paper came from the study of degeneracy loci and vexillary signed permutations in an ongoing joint project with William Fulton.  I thank him for detailed comments on the manuscript, as well as for an enjoyable and fruitful collaboration.

\medskip
\noindent
{\it Notation.}  
Our conventions and notation for signed permutations are very close to those described in \cite[\S8.1]{bb}.  We review them here.

We will consider permutations of (positive and negative) integers
\[
  \ldots, \bar{n}, \ldots, \bar{2},\bar{1},0,1,2,\ldots,n,\ldots,
\]
using the bar to denote a negative sign, and we take the natural order on them, as above.  All permutations are finite, in the sense that $v(m)=m$ whenever $|m|$ is sufficiently large.  We generally write permutations in $S_{2n+1}$ using one-line notation, by listing the values $v(\bar{n})\;v(\bar{n-1}) \cdots v(n)$.

A {\em signed permutation} is a permutation $w$ of these symbols with the property that for each $i$, $w(\bar\imath) = \bar{w(i)}$.  A signed permutation is in $W_n$ if $w(m) = m$ for all $m>n$; this is a group isomorphic to the hyperoctahedral group, the Weyl group of types $B_n$ and $C_n$.  When writing signed permutations in one-line notation, we only list the values on positive integers: $w\in W_n$ is represented as $w(1)\;w(2)\;\cdots\;w(n)$.  For example, $w=\bar{2}\;1\;\bar{3}$ is a signed permutation in $W_3$, and $w(\bar{3}) = 3$ since $w(3)=\bar{3}$.  The \emph{length} of a signed permutation can be computed as
\[
  \ell(w) = \#\{ 0<i<j \,|\, w(i)>w(j) \} + \#\{ 0<i\leq j \,|\, w(i)+w(j)<0 \}.
\]
The \emph{longest element} in $W_n$, denoted $w_\circ^{(n)}$, is $\bar{1}\;\bar{2}\;\cdots\;\bar{n}$, and has length $n^2$.

The definition of $W_n$ presents it as embedded in the symmetric group $S_{2n+1}$, considering the latter as the group of all permutations of the integers $\bar{n},\ldots,0,\ldots,n$.  We will write $\iota\colon W_n \hookrightarrow S_{2n+1}$ for emphasis when a signed permutation is considered as a full permutation.  Specifically, $\iota$ sends $w=w(1)\;w(2)\;\cdots\;w(n)$ to the permutation
\[
  \bar{w(n)}\;\cdots\;\bar{w(2)}\;\bar{w(1)}\;0\;w(1)\;w(2)\;\cdots\;w(n)
\]
in $S_{2n+1}$.  (There is a similar embedding $\iota'\colon W_n \hookrightarrow S_{2n}$, defined by omitting the value ``$w(0)=0$''.)

Using the natural inclusions $W_n \subset W_{n+1} \subset \cdots$, we have the infinite Weyl group $W_\infty = \bigcup W_n$, and we will sometimes refer to elements $w\in W_\infty$, when $n$ is understood or irrelevant.  The embeddings are compatible with the corresponding inclusions $S_{2n+1} \subset S_{2n+3} \subset \cdots$.

A permutation $v$ has a \define{descent} at position $i$ if $v(i)>v(i+1)$; here $i$ may be any integer.  The same definition applies to signed permutations $w$, but we only consider descents at positions $i\geq 0$, following the convention of recording the values of $w$ only on positive integers.  A descent at $0$ simply means that $w(1)$ is negative.  For example, $w = \bar{2}\;1\;\bar{3}$ has descents at $0$ and $2$, while $\iota(w) = 3\;\bar{1}\;2\;0\;\bar{2}\;1\;\bar{3}$ has descents at $-3$, $-1$, $0$, and $2$.

\section{Defining the essential set}\label{s.diagrams}

Our definitions of diagram and essential set for signed permutations are modeled on the analogous notions for permutations, so we briefly review that case first.  The conventions we use here are adapted to make the transition to signed permutations simpler, so they differ slightly from those used in \cite{fulton} and elsewhere.

\subsection{Essential sets for $S_{2n+1}$}\label{ss.essential-typeA}
We will consider arrays of boxes with rows and columns indexed by integers $\{\bar{n},\ldots,\bar{1},0,1,\ldots,n\}$.  The \emph{permutation matrix} associated to $v\in S_{2n+1}$ has dots in positions $(v(i),i)$, for $\bar{n}\leq i\leq n$, and is empty otherwise.  The \emph{diagram} of $v$ is the collection of boxes that remain after striking out those which are (weakly) south or east of a dot in the permutation matrix.  The number of boxes in the diagram is equal to the length of the permutation.  

The \emph{rank function} of a permutation is defined by
\begin{equation}\label{e.rank-typeA}
  r_v(p,q) = \#\{ i\leq \bar{p} \,|\, v(i) \geq q \},
\end{equation}
for $\bar{n} \leq p,q\leq n$.  This is also the number of dots strictly south and weakly west of the box $(q-1,\bar{p})$ in the permutation matrix of $v$.

Adapting terminology from \cite{fulton} and \cite{rwy}, the \define{essential positions} of $v$ are the pairs $(p,q)$ such that the box $(q-1,\bar{p})$ is a southeast (SE) corner of the diagram of $v$.  (This apparently awkward notation is easier to understand in terms of the geometry explained in the next paragraph, together with the diagrams.)   The \define{essential set} of $v$ is the set $\Ess(v)$ of $(k,p,q)$ such that $(p,q)$ is an essential position, and $k=r_v(p,q)$; it is obviously in bijection with the set of essential positions of $v$, but it will be useful to preserve the extra information of the rank function.  

It is often helpful to think of flags of subspaces $E_\bullet$ and $F_\bullet$ in a $(2n+1)$-dimensional vector space $V$, with basis $e_{\bar{n}},\ldots,e_{\bar{1}},0,e_1,\ldots,e_n$.  The $n\times n$ array corresponds to a matrix with respect to this basis.  The space $E_p$ is spanned by the columns labelled $\bar{p},\bar{p+1},\ldots$---i.e., from $\bar{p}$ to the left---and the space $F_q$ is spanned by the basis vectors $e_q,e_{q+1},\ldots$---i.e., from $q$ down.  The number $k=r_v(p,q)$ records the dimension of $E_p \cap F_q$; it is also the corank of the map $E_p \to V/F_q$, and the box $(q-1,\bar{p})$ is the SE corner of submatrix corresponding to this map.

Figure~\ref{f.diagram1} illustrates these notions, for $v = \bar{1}\;\bar{3}\;2\;0\;\bar{2}\;3\;1$, with the numbers $r_v(p,q)$ placed in the boxes $(q-1,\bar{p})$ corresponding to the essential positions.  The essential set is
\[
\Ess(v) = \{\,(1,3,\bar{1}),\,(1,1,2),\,(3,0,\bar{1}),\,(2,\bar{2},2)\,\}.
\]

\begin{figure}[ht]
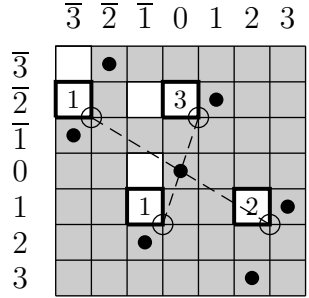


\pspicture(80,80)(-70,-50)

\psset{unit=1.35pt}

\pspolygon[fillstyle=solid,fillcolor=lightgray,linecolor=lightgray](-35,35)(35,35)(35,-35)(-35,-35)(-35,35)

\psline{-}(35,35)(-35,35)
\psline{-}(35,25)(-35,25)
\psline{-}(35,15)(-35,15)
\psline{-}(35,5)(-35,5)
\psline{-}(35,-5)(-35,-5)
\psline{-}(35,-15)(-35,-15)
\psline{-}(35,-25)(-35,-25)
\psline{-}(35,-35)(-35,-35)

\psline{-}(35,35)(35,-35)
\psline{-}(25,35)(25,-35)
\psline{-}(15,35)(15,-35)
\psline{-}(5,35)(5,-35)
\psline{-}(-5,35)(-5,-35)
\psline{-}(-15,35)(-15,-35)
\psline{-}(-25,35)(-25,-35)
\psline{-}(-35,35)(-35,-35)

\pscircle*(-30,10){2}
\pscircle*(-20,30){2}
\pscircle*(-10,-20){2}
\pscircle*(0,0){2}
\pscircle*(10,20){2}
\pscircle*(20,-30){2}
\pscircle*(30,-10){2}

\whitebox(-35,15)
\whitebox(-35,25)
\whitebox(-15,15)
\whitebox(-5,15)
\whitebox(-15,-5)
\whitebox(-15,-15)
\whitebox(15,-15)

\psline[linewidth=1.5pt]{-}(-35,15)(-35,25)(-25,25)(-25,15)(-35,15)

\psline[linewidth=1.5pt]{-}(-15,-15)(-15,-5)(-5,-5)(-5,-15)(-15,-15)

\psline[linewidth=1.5pt]{-}(-5,15)(-5,25)(5,25)(5,15)(-5,15)

\psline[linewidth=1.5pt]{-}(15,-15)(15,-5)(25,-5)(25,-15)(15,-15)

\pscircle(-25,15){3}
\pscircle(25,-15){3}

\pscircle(-5,-15){3}
\pscircle(5,15){3}

\psline[linestyle=dashed]{-}(-25,15)(25,-15)
\psline[linestyle=dashed]{-}(-5,-15)(5,15)

\rput(-45,30){$\bar{3}$}
\rput(-45,20){$\bar{2}$}
\rput(-45,10){$\bar{1}$}
\rput(-45,0){$0$}
\rput(-45,-10){$1$}
\rput(-45,-20){$2$}
\rput(-45,-30){$3$}

\rput[b](-30,40){$\bar{3}$}
\rput[b](-20,40){$\bar{2}$}
\rput[b](-10,40){$\bar{1}$}
\rput[b](0,40){$0$}
\rput[b](10,40){$1$}
\rput[b](20,40){$2$}
\rput[b](30,40){$3$}

\rput(-10,-10){\footnotesize{$1$}}
\rput(-30,20){\footnotesize{$1$}}

\rput(20,-10){\footnotesize{$2$}}
\rput(0,20){\footnotesize{$3$}}

\endpspicture

\caption{Diagram for $v = \bar{1}\;\bar{3}\;2\;0\;\bar{2}\;3\;1$, with essential set highlighted.  The circled corners connected with dashed lines illustrate the symmetry of Lemma~\ref{l.symmetric-essential}. \label{f.diagram1}}
\end{figure}

An equivalent, numerical description of essential positions is useful: $(a,b)$ is a SE corner of the diagram of $v$ if and only if
\begin{align}
  v^{-1}(a) > b \; \text{ and } \; v(b) > a;  \quad  v^{-1}(a+1) \leq b \;\text{ and } \; v(b+1) \leq a . \label{e.corners}
\end{align}
This is also equivalent to requiring the conditions
\begin{align}
 v(b) &> a \geq v(b+1)  \quad \text{ and } \label{e.descents1} \\ 
 v^{-1}(a) &>b \geq v^{-1}(a+1) \tag{\theequation$'$}. \label{e.descents2}
\end{align}
In other words, $v$ has a descent at $b$, with $a$ lying in the interval of the jump, and $v^{-1}$ has a descent at $a$, with $b$ lying in the interval of the jump.  The essential positions are recovered as those $(p,q)$ such that $(q-1,\bar{p})$ satisfies \eqref{e.corners}, and the essential set is recovered by including $k=r_v(p,q)$.  

A \define{basic triple (of type A)} is a triple of integers $(k,p,q)$ such that $k>\max\{0,1-p-q\}$.    
It is easy to see that the elements of $\Ess(v)$ are basic triples.  

\subsection{Essential sets for $W_n$}\label{ss.essential-signed}

By convention, a signed permutation $w$ is written in terms of its values on positive integers, but since $w(\bar{\imath}) = \bar{w(i)}$, it is also determined by its values on negative integers.  Given $w\in W_n$, its corresponding matrix is the $(2n+1)\times n$ array of boxes, with rows indexed by $\{\bar{n},\ldots,n\}$ and columns indexed by $\{\bar{n},\ldots,\bar{1}\}$, with dots in the boxes $(w(i),i)$ for $\bar{n}\leq i\leq \bar{1}$.  
For each dot, we also place an ``$\times$'' in the same column and opposite row, as well as in the boxes to the right of this $\times$.  (To be precise, an $\times$ is placed in those boxes $(a,b)$ such that $a=\bar{w(i)}$ for some $i\leq b$.  This is similar to the matrix representation of Schubert cells described in \cite[\S6.1]{fp}.)

The \define{extended diagram} $D^+(w)$ of a signed permutation $w$ is the collection of boxes in the $(2n+1)\times n$ rectangle that remain after striking out those which are south or east of a dot.  The \define{diagram} $D(w) \subseteq D^+(w)$ is the subset of boxes of the extended diagram that are not marked with an $\times$.  As for permutations, the number of boxes of $D(w)$ is equal to the length of $w$.  In fact, the inversions of $w$ are in bijection with the boxes of $D(w)$: writing the positive simple roots for $\mathrm{B}_n$ as $\ee_1, \ee_2-\ee_1,\ldots,\ee_n-\ee_{n-1}$, and using the convention $\ee_{\bar\imath}=-\ee_i$, the box in position $(i,j)$ lies in $D(w)$ if and only if $\ee_i-\ee_{w(j)}$ is an inversion.

The matrix and extended diagram come from the corresponding notions for ordinary permutations, via the embeddeding $\iota\colon W_n\hookrightarrow S_{2n+1}$: the matrix and extended diagram of $w\in W_n$ correspond to the first $n$ columns of the matrix and diagram for $\iota(w)$.  See Figure~\ref{f.diagram2}, where the unshaded boxes are those of $D^+(w)$.

The rank function of a signed permutation is defined by
\begin{equation}
  r_w(p,q) = \#\{i\geq p\,|\, w(i) \leq \bar{q}\},
\end{equation}
for $1\leq p\leq n$ and $\bar{n}\leq q\leq n$.  Using the symmetry $w(\bar\imath) = \bar{w(i)}$, this is also equal to $\#\{ i\leq \bar{p}\,|\, w(i) \geq q\}$, so the rank functions $r_w$ and $r_{\iota(w)}$ agree where both are defined.

Integers $(k,p,q)$ form a \define{basic triple (of type B)} if the same inequality as for type A is satisfied, i.e., $k>\max\{0,1-p-q\}$, together with three additional conditions: $p>0$, $q\neq 0$, and if $p=1$ then $q>0$.  

Before discussing essential sets for signed permutations, we record a simple observation.  When $v=\iota(w)$ lies in the image of $\iota\colon W_n \hookrightarrow S_{2n+1}$, its essential set is symmetric: the lower-right corners of the boxes corresponding to essential positions are reflected about the center.  (The example of $w=\bar{2}\;3\;1$ is shown in Figure~\ref{f.diagram1}, where these corners are circled.)  To describe this symmetry more precisely, given a basic triple $(k,p,q)$ of type A, let $(k,p,q)^\perp = (k+p+q-1,\bar{p}+1,\bar{q}+1)$ be its \define{reflection}; this defines an involution on basic triples.

\begin{lemma}\label{l.symmetric-essential}
For $w\in W_n$, the essential set of $\iota(w)\in S_{2n+1}$ is preserved by reflection.  That is, $(k,p,q)$ is in $\Ess(\iota(w))$ if and only if $(k,p,q)^\perp$ is in $\Ess(\iota(w))$.
\end{lemma}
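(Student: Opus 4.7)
The plan is to convert the claim into two geometric assertions about the permutation matrix of $v = \iota(w)$ and then verify each using the central symmetry $v(-i) = -v(i)$. The essential position $(p, q)$ is associated with the box $(q - 1, \bar p)$, so the proposed reflection $(k, p, q) \mapsto (k + p + q - 1, \bar p + 1, \bar q + 1)$ corresponds, at the level of boxes, to the involution $\rho : (a, b) \mapsto (-a - 1, -b - 1)$. The task then splits into showing (i) that the SE corner condition is preserved by $\rho$, and (ii) that the rank values match, i.e.\ $r_v(\bar p + 1, \bar q + 1) = r_v(p, q) + p + q - 1$.

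For (i), I would substitute $(a', b') = (-a - 1, -b - 1)$ into the four inequalities of the characterization (\ref{e.corners}). Applying $v(-j) = -v(j)$ and $v^{-1}(-j) = -v^{-1}(j)$ to each inequality shows that the four conditions defining ``$(a', b')$ is a SE corner'' are precisely the four defining ``$(a, b)$ is a SE corner'', just permuted among themselves. Hence $\rho$ restricts to an involution on the SE corners of the diagram of $v$, giving the bijective correspondence of essential positions.

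For (ii), I would expand $r_v(\bar p + 1, \bar q + 1) = \#\{i \leq p - 1 : v(i) \geq \bar q + 1\}$ and restrict to $i \in [\bar n, p - 1]$, using that $v$ is the identity outside $[\bar n, n]$. Since $[\bar n, p - 1]$ has $p + n$ elements, passing to the complementary condition reduces the problem to evaluating $\#\{i \in [\bar n, p - 1] : v(i) \leq \bar q\}$. Because $v$ restricts to a bijection of $[\bar n, n]$ onto itself, the total count $\#\{i \in [\bar n, n] : v(i) \leq \bar q\}$ equals $n - q + 1$; and central symmetry (via the substitution $j = -i$) converts $\#\{i \in [p, n] : v(i) \leq \bar q\}$ into $\#\{j \leq \bar p : v(j) \geq q\} = r_v(p, q)$. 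Subtracting these counts yields $\#\{i \in [\bar n, p - 1] : v(i) \leq \bar q\} = (n - q + 1) - r_v(p, q)$, and hence
\[
  r_v(\bar p + 1, \bar q + 1) = (p + n) - \bigl((n - q + 1) - r_v(p, q)\bigr) = r_v(p, q) + p + q - 1,
\]
as required. Everything is a direct check once the correspondence between the combinatorial reflection and the geometric involution $\rho$ is in place; the only bookkeeping concern is tracking the cardinalities of the relevant integer intervals and confirming that $v$'s behavior outside $[\bar n, n]$ makes no contribution.
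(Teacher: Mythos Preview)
Your proof is correct and follows essentially the same approach as the paper: part~(i) is the same manipulation of the inequalities~\eqref{e.corners} under the symmetry $v(\bar\imath)=\bar{v(i)}$, and part~(ii) is the rank computation that the paper summarizes in one sentence by invoking the identity $r_v(p,q)=\#\{i\geq p : v(i)\leq \bar q\}$, which is exactly your substitution $j=-i$. Your write-up of (ii) is more explicit than the paper's, but the underlying argument is the same.
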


\begin{proof} 
First, we show that the SE corners of the diagram are centrally symmetric.  If $v=\iota(w)$ and $(a,b)$ is a SE corner of the diagram of $v$, then using $v(\bar{k}) = \bar{v(k)}$, the inequalities \eqref{e.corners} become
\begin{align*}
  \bar{b} > v^{-1}(\bar{a}) \quad &\text{ and } \quad \bar{a} > v(\bar{b}); \\
  \bar{b} \leq v^{-1}(\bar{a+1}) \quad &\text{ and } \quad \bar{a} \leq v(\bar{b+1}).
\end{align*}
Equivalently,
\begin{align*}
  v^{-1}(\bar{a})\leq \bar{b+1} \quad &\text{ and } \quad v(\bar{b}) \leq \bar{a+1}; \\
  v^{-1}(\bar{a+1}) > \bar{b+1} \quad &\text{ and } \quad v(\bar{b+1}) > \bar{a+1}.
\end{align*}
These are the conditions for $(\bar{a+1},\bar{b+1})$ to be a SE corner corresponding to an essential position.

This shows that $(k,p,q)$ is in $\Ess(v)$ if and only if $(k',\bar{p}+1,\bar{q}+1)$ is in $\Ess(v)$, where $k' = r_v(\bar{p}+1,\bar{q}+1)$.  It remains to check that this rank is equal to $k+p+q-1$, as claimed.  This is easily done by examining a permutation matrix and using $r_v(p,q) = \#\{ i\leq \bar{p}\,|\, w(i) \geq q \} = \#\{ i\geq p\,|\, w(i) \leq \bar{q} \}$.
\end{proof}

\begin{figure}
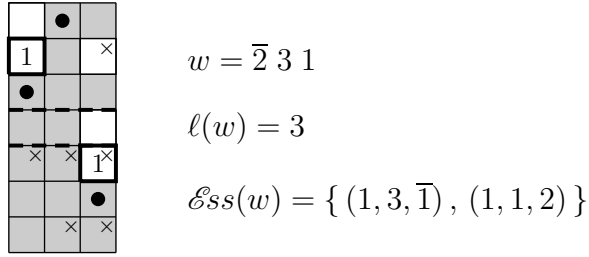

\pspicture(100,50)(-50,-50)

\psset{unit=1.35pt}

\pspolygon[fillstyle=solid,fillcolor=lightgray,linecolor=lightgray](-35,35)(-5,35)(-5,-35)(-35,-35)(-35,35)

\psline{-}(-5,35)(-35,35)
\psline{-}(-5,25)(-35,25)
\psline{-}(-5,15)(-35,15)
\psline{-}(-5,5)(-35,5)
\psline{-}(-5,-5)(-35,-5)
\psline{-}(-5,-15)(-35,-15)
\psline{-}(-5,-25)(-35,-25)
\psline{-}(-5,-35)(-35,-35)

\psline{-}(-5,35)(-5,-35)
\psline{-}(-15,35)(-15,-35)
\psline{-}(-25,35)(-25,-35)
\psline{-}(-35,35)(-35,-35)

\pscircle*(-30,10){2}
\pscircle*(-20,30){2}
\pscircle*(-10,-20){2}

\whitebox(-35,15)
\whitebox(-35,25)
\whitebox(-15,15)
\whitebox(-15,-5)
\whitebox(-15,-15)

\psline[linestyle=dashed,linewidth=1.5pt]{-}(-5,5)(-35,5)
\psline[linestyle=dashed,linewidth=1.5pt]{-}(-5,-5)(-35,-5)

\psline[linewidth=1.5pt]{-}(-35,15)(-35,25)(-25,25)(-25,15)(-35,15)

\psline[linewidth=1.5pt]{-}(-15,-15)(-15,-5)(-5,-5)(-5,-15)(-15,-15)

\rput[bl](-30,-10){\tiny{$\times$}}
\rput[bl](-20,-10){\tiny{$\times$}}
\rput[bl](-10,-10){\tiny{$\times$}}
\rput[bl](-20,-30){\tiny{$\times$}}
\rput[bl](-10,-30){\tiny{$\times$}}
\rput[bl](-10,20){\tiny{$\times$}}

\rput(-10,-10){\footnotesize{$1$}}
\rput(-30,20){\footnotesize{$1$}}

\rput[l](15,20){$w=\bar{2}\;3\;1$}
\rput[l](15,0){$\ell(w)=3$}
\rput[l](15,-20){$\Ess(w) = \{\, (1,3,\bar{1})\,,\,(1,1,2) \,\}$}

\endpspicture
\caption{Matrix, diagram, and essential set for a signed permutation.  (As a visual aid, the center row is indicated with dashed lines.) \label{f.diagram2}}
\end{figure}

The essential set of a signed permutation $w$ is a subset of the essential set of the corresponding permutation $\iota(w)$.

\begin{definition}\label{d.essential}
Let $w$ be a signed permutation.  The \define{essential set} of $w$ is the set $\Ess(w)$ of triples of integers $(k,p,q)$, with $k=r_w(p,q)$, such that $(q-1,\bar{p})$ is a SE corner of the extended diagram $D^+(w)$, with two exceptions: a SE corner does not contribute to $\Ess(w)$ if
\begin{enumerate}
\item it is in the rightmost column and (strictly) above the center row; or

\smallskip

\item it is not in the rightmost column and there is another SE corner above it, in the same column and ``opposite'' row---that is, in box $(\bar{q},\bar{p})$---and the two rank conditions differ by exactly $q-1$.
\end{enumerate}
The \define{essential positions} of $w$ are those pairs $(p,q)$ such that $(k,p,q) \in \Ess(w)$, for $k=r_w(p,q)$.  (As before, $\Ess(w)$ is clearly in bijection with the set of essential positions.)
\end{definition}

The two exceptions can be phrased more formally as follows:
\begin{enumerate}
\item if $p=1$ and $q<0$, then $(k,p,q)$ is not in $\Ess(w)$;

\smallskip

\item if $p>1$ and $q>0$, and both $(q-1,\bar{p})$ and $(\bar{q},\bar{p})$ are SE corners, with $k=r_w(p,q) = r_w(p,\bar{q}+1)-q+1$, then $(k,p,q)$ is not in $\Ess(w)$.
\end{enumerate}

\noindent
The first exception is easily understood:  $w(0)=0$ prevents the diagram of $\iota(w)$ from having SE corners in this region, so such $(k,p,q)$ are not included in $\Ess(\iota(w))$.  (Compare Figures~\ref{f.diagram1} and \ref{f.diagram2}.)  Noting this, it is easy to see that $\Ess(w)\subseteq \Ess(\iota(w))$, and consequently, each $(k,p,q) \in \Ess(w)$ is a basic triple of type B.  (In fact, this also explains why those $(k,p,q)$ with $p=1$ and $q<0$ are excluded in the definition of basic triples of type B.)

The second exception has a geometric explanation.  For any $k,p,q>0$, the condition $\dim(E_p\cap F_q) \geq k$ is implied by the condition $\dim(E_p\cap F_q^\perp)\geq k+q-1$, because an isotropic subspace of $F_q^\perp/F_q$ must have dimension no greater than $q-1$.  The former condition is therefore redundant, and should be excluded from the essential set.

Examples illustrating Exception (ii) are shown in Figure~\ref{f.essential-ex1}.  In Figure~\ref{f.essential-ex1}(a), placing three dots strictly south and weakly west (SSW) of the box $(\bar{3},\bar{3})$ forces at least one of them to be SSW of the box $(2,\bar{3})$, so $(1,3,3)$ is not in the essential set.  In Figure~\ref{f.essential-ex1}(b), by contrast, both SE corners are required; the essential set is $\{(3,3,\bar{2}),\; (2,3,3)\}$.
\begin{figure}[t]
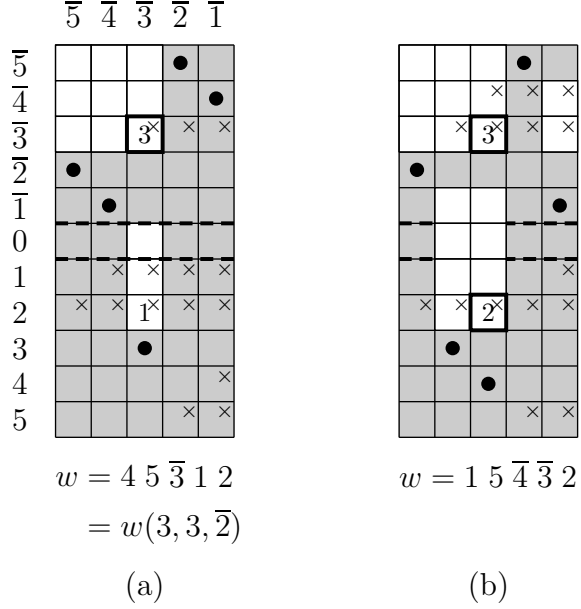

\pspicture(80,110)(-100,-140)

\psset{unit=1.35pt}

\pspolygon[fillstyle=solid,fillcolor=lightgray,linecolor=lightgray](-55,55)(-5,55)(-5,-55)(-55,-55)(-55,55)

\psline{-}(-5,55)(-55,55)
\psline{-}(-5,45)(-55,45)
\psline{-}(-5,35)(-55,35)
\psline{-}(-5,25)(-55,25)
\psline{-}(-5,15)(-55,15)
\psline{-}(-5,5)(-55,5)
\psline{-}(-5,-5)(-55,-5)
\psline{-}(-5,-15)(-55,-15)
\psline{-}(-5,-25)(-55,-25)
\psline{-}(-5,-35)(-55,-35)
\psline{-}(-5,-45)(-55,-45)
\psline{-}(-5,-55)(-55,-55)

\psline{-}(-5,55)(-5,-55)
\psline{-}(-15,55)(-15,-55)
\psline{-}(-25,55)(-25,-55)
\psline{-}(-35,55)(-35,-55)
\psline{-}(-45,55)(-45,-55)
\psline{-}(-55,55)(-55,-55)

\rput(-65,50){$\bar{5}$}
\rput(-65,40){$\bar{4}$}
\rput(-65,30){$\bar{3}$}
\rput(-65,20){$\bar{2}$}
\rput(-65,10){$\bar{1}$}
\rput(-65,0){$0$}
\rput(-65,-10){$1$}
\rput(-65,-20){$2$}
\rput(-65,-30){$3$}
\rput(-65,-40){$4$}
\rput(-65,-50){$5$}

\rput[b](-50,60){$\bar{5}$}
\rput[b](-40,60){$\bar{4}$}
\rput[b](-30,60){$\bar{3}$}
\rput[b](-20,60){$\bar{2}$}
\rput[b](-10,60){$\bar{1}$}

\pscircle*(-50,20){2}
\pscircle*(-40,10){2}
\pscircle*(-30,-30){2}
\pscircle*(-20,50){2}
\pscircle*(-10,40){2}

\whitebox(-55,45)
\whitebox(-55,35)
\whitebox(-55,25)

\whitebox(-45,45)
\whitebox(-45,35)
\whitebox(-45,25)

\whitebox(-35,45)
\whitebox(-35,35)
\whitebox(-35,25)
\whitebox(-35,-5)
\whitebox(-35,-15)
\whitebox(-35,-25)

\psline[linestyle=dashed,linewidth=1.5pt]{-}(-5,5)(-55,5)
\psline[linestyle=dashed,linewidth=1.5pt]{-}(-5,-5)(-55,-5)

\psline[linewidth=1.5pt]{-}(-35,25)(-35,35)(-25,35)(-25,25)(-35,25)
\rput(-30,30){\footnotesize{$3$}}

\rput(-30,-20){\footnotesize{$1$}}

\rput[bl](-50,-20){\tiny{$\times$}}
\rput[bl](-40,-20){\tiny{$\times$}}
\rput[bl](-30,-20){\tiny{$\times$}}
\rput[bl](-20,-20){\tiny{$\times$}}
\rput[bl](-10,-20){\tiny{$\times$}}
\rput[bl](-40,-10){\tiny{$\times$}}
\rput[bl](-30,-10){\tiny{$\times$}}
\rput[bl](-20,-10){\tiny{$\times$}}
\rput[bl](-10,-10){\tiny{$\times$}}
\rput[bl](-30,30){\tiny{$\times$}}
\rput[bl](-20,30){\tiny{$\times$}}
\rput[bl](-10,30){\tiny{$\times$}}
\rput[bl](-20,-50){\tiny{$\times$}}
\rput[bl](-10,-50){\tiny{$\times$}}
\rput[bl](-10,-40){\tiny{$\times$}}

\rput[l](-55,-65){$w = 4\;5\;\bar{3}\;1\;2$}
\rput[l](-46,-80){$ = w(3,3,\bar{2})$}

\rput(-30,-97){(a)}

\endpspicture
\pspicture(70,110)(-50,-140)

\psset{unit=1.35pt}

\pspolygon[fillstyle=solid,fillcolor=lightgray,linecolor=lightgray](-55,55)(-5,55)(-5,-55)(-55,-55)(-55,55)

\psline{-}(-5,55)(-55,55)
\psline{-}(-5,45)(-55,45)
\psline{-}(-5,35)(-55,35)
\psline{-}(-5,25)(-55,25)
\psline{-}(-5,15)(-55,15)
\psline{-}(-5,5)(-55,5)
\psline{-}(-5,-5)(-55,-5)
\psline{-}(-5,-15)(-55,-15)
\psline{-}(-5,-25)(-55,-25)
\psline{-}(-5,-35)(-55,-35)
\psline{-}(-5,-45)(-55,-45)
\psline{-}(-5,-55)(-55,-55)

\psline{-}(-5,55)(-5,-55)
\psline{-}(-15,55)(-15,-55)
\psline{-}(-25,55)(-25,-55)
\psline{-}(-35,55)(-35,-55)
\psline{-}(-45,55)(-45,-55)
\psline{-}(-55,55)(-55,-55)

\psline[linestyle=dashed,linewidth=1.5pt]{-}(-5,5)(-55,5)
\psline[linestyle=dashed,linewidth=1.5pt]{-}(-5,-5)(-55,-5)

\pscircle*(-50,20){2}
\pscircle*(-40,-30){2}
\pscircle*(-30,-40){2}
\pscircle*(-20,50){2}
\pscircle*(-10,10){2}

\whitebox(-55,45)
\whitebox(-55,35)
\whitebox(-55,25)

\whitebox(-45,45)
\whitebox(-45,35)
\whitebox(-45,25)
\whitebox(-45,5)
\whitebox(-45,-5)
\whitebox(-45,-15)
\whitebox(-45,-25)

\whitebox(-35,45)
\whitebox(-35,35)
\whitebox(-35,25)
\whitebox(-35,5)
\whitebox(-35,-5)
\whitebox(-35,-15)
\whitebox(-35,-25)

\whitebox(-15,35)
\whitebox(-15,25)

\rput[bl](-50,-20){\tiny{$\times$}}
\rput[bl](-40,-20){\tiny{$\times$}}
\rput[bl](-30,-20){\tiny{$\times$}}
\rput[bl](-20,-20){\tiny{$\times$}}
\rput[bl](-10,-20){\tiny{$\times$}}

\rput[bl](-40,30){\tiny{$\times$}}
\rput[bl](-30,30){\tiny{$\times$}}
\rput[bl](-20,30){\tiny{$\times$}}
\rput[bl](-10,30){\tiny{$\times$}}

\rput[bl](-30,40){\tiny{$\times$}}
\rput[bl](-20,40){\tiny{$\times$}}
\rput[bl](-10,40){\tiny{$\times$}}

\rput[bl](-20,-50){\tiny{$\times$}}
\rput[bl](-10,-50){\tiny{$\times$}}

\rput[bl](-10,-10){\tiny{$\times$}}

\psline[linewidth=1.5pt]{-}(-35,-25)(-35,-15)(-25,-15)(-25,-25)(-35,-25)
\rput(-30,-20){\footnotesize{$2$}}

\psline[linewidth=1.5pt]{-}(-35,25)(-35,35)(-25,35)(-25,25)(-35,25)
\rput(-30,30){\footnotesize{$3$}}

\rput[l](-55,-65){$w=1\;5\;\bar{4}\;\bar{3}\;2$}

\rput(-30,-97){(b)}

\endpspicture

\caption{Diagrams and essential sets of signed permutations.  Example (a) is basic; example (b) is not. \label{f.essential-ex1}}
\end{figure}

\section{Rank conditions and Bruhat order}\label{s.rank}

The main results of this section show that essential sets determine the corresponding signed permutations, and can be used to make comparisons in Bruhat order.  We also establish the connection with the general poset-theoretic essential sets of \cite{rwy}.  The statements are primarily combinatorial, but since some of the terminology and motivation is geometric, we begin by briefly reviewing the context.

Let $V$ be an odd-dimensional vector space, say $\dim V = 2n+1$, and fix a complete flag of subspaces
\[
  V = F_{\bar{n}} \supset  \cdots \supset F_{\bar{1}} \supset F_0 \supset F_1 \supset \cdots \supset F_n,
\]
with $\dim F_i = n+1-i$.  
Each permutation $v\in S_{2n+1}$ determines a Schubert variety in the complete flag variety, defined by
\[
  \Omega_v = \{ E_\bullet \,|\, \dim(E_p \cap F_q) \geq r_v(p,q) \text{ for } \bar{n}\leq p,q \leq n \}.
\]
The rank conditions in this definition are highly redundant.  Up to a change of notation, the results of \cite[\S3]{fulton} show that it is sufficient to restrict to those $(p,q)$ which are essential positions for $v$.

Now equip $V$ with a nondegenerate quadratic form, and assume $F_\bullet$ is isotropic.  A signed permutation $w\in W_n$ defines a Schubert variety in the orthogonal flag variety, by a similar prescription:
\[
  \Omega_w = \{ E_\bullet \,|\, \dim(E_p \cap F_q) \geq r_w(p,q) \text{ for } 1\leq p\leq n \text{ and }\bar{n}\leq q \leq n \}.
\]
We will see that the rank conditions $\dim(E_p\cap F_q) \geq k$ for $(k,p,q)\in\Ess(w)$ suffice to define $\Omega_w$.  (As mentioned in the introduction, the same discussion applies to degeneracy loci, with the flag varieties replaced by an arbitrary base variety.)

\subsection{Basic permutations and signed permutations}\label{ss.basic}

A basic triple $(k,p,q)$ of type A defines a \define{basic permutation} $v(k,p,q)$, which is the minimal element in Bruhat order such that $r_v(p,q)\geq k$.   
It can be written down as follows: start at position $\bar{p}$, and, proceeding {\em right to left}, place $k$ entries in descending order, ending with $q$; then fill in the remaining entries in increasing order from left to right.  
For example, $v(3,\bar{1},2) = \bar{4}\; \bar{3}\; \bar{2}\; {\bm 2}\; {\bm 3}\;{\bm 4}\;\bar{1}\; 0\; 1$.  (The three entries placed in the first step are at positions $1$, $0$, and $\bar{1}$, shown in bold.)

Recall that a permutation $v$ is \define{grassmannian} if it has exactly one descent, and $v$ is \define{bigrassmannian} if both $v$ and $v^{-1}$ are grassmannian.  From the construction, $v(k,p,q)$ is grassmannian, having a single descent at $\bar{p}$.  Moreover, $v^{-1}$ is equal to $v(k,\bar{q}+1,\bar{p}+1)$, so $v$ is bigrassmannian.  In fact, all bigrassmannian permutations arise this way: the basic permutations are exactly the bigrassmannian ones (see, e.g., \cite[Lemma~4.1]{rwy}).
  
It follows from results of Kobayashi \cite{kob} and Reiner, Woo, and Yong \cite{rwy} that for a permutation $v$, the essential set $\Ess(v)$ coincides with the set of those basic triples $(k,p,q)$ such that $v(k,p,q)$ is maximal among all basic permutations below $v$ in Bruhat order on the symmetric group.  

We will provide an analogue of this fact for signed permutations.  
Parallel to the type A situation, the \define{basic signed permutation} corresponding to a basic triple of type B is the element $w=w(k,p,q)$ which is minimal (in Bruhat order), such that $r_w(p,q) = k$.  It is not immediately obvious that such a minimum is unique, though it is not hard to prove (see Lemma~\ref{l.unique}).  To write $w(k,p,q)$, start at position $p$, and place $k$ consecutive entries, in increasing order, ending with $\bar{q}$; then fill in the unused positive entries in increasing order.  If $0<\bar{q}<k$, then ``$q-1,\; 1$'' counts as ``consecutive''.  For example, $w(2,2,3) = 1\;\bar{4}\;\bar{3}\;2$, and $w(3,2,\bar{2}) = 4\;\bar{3}\;1\;2$.

The explicit formula for a basic signed permutation depends on the relative order of $k$, $p$, $q$, and $0$.

\medskip

\renewcommand{\arraystretch}{1.3}
\begin{tabular}{| c || l |} \hline
   &  $w(k,p,q)$  \\ \hline
$q\geq p$ &   $1,\;\cdots,\; p-1,\,\;\bar{q+k-1},\;\cdots,\;\bar{q},\;p,\;\cdots,\;q-1$ \\ \hline
$p > q > 0$ & $1,\;\cdots,\; q-1,\;q+k,\;\cdots,\;p+k,\;\bar{q+k-1},\;\cdots,\;\bar{q}$ \\ \hline
$k> \bar{q} >0$ & $k+1,\;\cdots,\; p+k-1,\;\bar{k},\;\cdots,\;q-1,\;1,\;\cdots,\;\bar{q}$  \\ \hline
$\bar{q} \geq k$ & $1,\;\cdots,\; \bar{q}-k,\;\bar{q}+1,\;\cdots,\;p+k-1,\;\bar{q+k-1},\;\cdots,\;\bar{q}$ \\ \hline
\end{tabular}
\renewcommand{\arraystretch}{1.0}

\medskip

\noindent
The inverse of $w(k,p,q)$ is $w(k,q,p)$ when $q>0$, and it is $w(p+q+k-1,\bar{q-1},\bar{p-1})$ when $q<0$; in particular, the inverse of a basic signed permutation is again basic.  
The length of $w(k,p,q)$ is computed as follows:
  
\begin{center}
\renewcommand{\arraystretch}{1.3}
\begin{tabular}{| c || l |} \hline
   &  $\ell(w(k,p,q))$  \\ \hline
$q > 0$ & $(p+q-1)k + \binom{k}{2}$ \\ \hline
$k> \bar{q} >0$ & $pk + \binom{k}{2} - \binom{\bar{q}+1}{2}$  \\ \hline
$\bar{q} \geq k$ & $(p+q+k-1)k$ \\ \hline
\end{tabular}
\renewcommand{\arraystretch}{1.0}
\end{center}

\begin{remark}
A basic signed permutation $w(k,p,q)$ is grassmannian, with a unique descent at $p-1$.  Since its inverse is also basic, the basic elements are bigrassmannian.  However, in contrast to type A, not all bigrassmannians are basic.  For example, in $W_4$, there are $45$ bigrassmannians, and all but one are basic; the exception is $1\;4\;\bar{3}\;2$.  In $W_5$, there are five elements that are bigrassmannian but not basic: $1\;4\;\bar{3}\;2\;5$, $1\;4\;5\;\bar{3}\;2$, $1\;5\;\bar{4}\;\bar{3}\;2$, $1\;5\;\bar{4}\;2\;3$, and $1\;2\;5\;\bar{4}\;3$.  See \cite[\S4]{gk} for a direct comparison of bigrassmannians and basic elements in $W_n$ via generating functions for each.
\end{remark}

The assignment of a basic signed permutation to a basic triple is one-to-one.  The smallest $n$ such that $w(k,p,q) \in W_n$ is $n(k,p,q) = \max\{p+k-1,\,q+k-1\}$.  This makes it easy to enumerate the basic elements of $W_n$: we have
\begin{equation}\label{e.basic-enum}
  \#\{ w(k,p,q) \in W_n \} =  \frac{2n^3+n}{3}.
\end{equation}

We now come to the poset-theoretic characterization of the essential set.  For $v\in S_\infty$, the main theorem of \cite{kob} identifies $\Ess(v)$ with the set of type A basic triples $(k,p,q)$ such that $v(k,p,q)$ is maximal among all basic permutations below $v$ in Bruhat order.  We have an analogue of this for signed permutations:

\begin{theorem}\label{t.ess-diagram}
Let $w$ be a signed permutation.  The essential set $\Ess(w)$ is equal to the set of basic triples $(k,p,q)$ such that $w(k,p,q)$ is maximal among all basic signed permutations below $w$ in Bruhat order.  
\end{theorem}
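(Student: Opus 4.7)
The plan is to reduce the statement to a comparison of rank values via the equivalence $w(k,p,q) \leq w$ if and only if $r_w(p,q) \geq k$. The forward direction is immediate since rank functions weakly increase along Bruhat order on $W_n$; for the reverse direction, I would invoke the uniqueness of $w(k,p,q)$ as the minimal element with $r(p,q)=k$ (the forthcoming Lemma~\ref{l.unique}), together with the standard characterization of Bruhat order on $W_n$ by coordinate-wise comparison of rank functions. Given this equivalence, the basic elements $\leq w$ are parametrized by the set $\{(k,p,q)\,|\,k \leq r_w(p,q)\}$, and for each position $(p,q)$ the Bruhat-largest basic element realized there corresponds to $k=r_w(p,q)$.

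The next step is to determine, among the triples $(r_w(p,q),p,q)$, which correspond to Bruhat-maximal basic elements. I would introduce the relation $(k,p,q) \preceq (k',p',q')$ meaning $w(k,p,q) \leq w(k',p',q')$ and describe it explicitly using the one-line formulas for basic signed permutations. The ``monotone'' part of this preorder comes from moves that increase $k$, $p$, or $q$ in coordinate fashion, and these match exactly the condition for $(q-1,\bar{p})$ to be a SE corner of the extended diagram $D^+(w)$: if some adjacent position has comparable or larger rank, the basic element at $(p,q)$ is not maximal. Consequently, an essential triple must correspond to a SE corner of $D^+(w)$.

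The two exceptions in Definition~\ref{d.essential} come from additional type B dominance relations. Exception~(i) is built into the definition of a type B basic triple: for $p=1$ and $q<0$ no such triple exists, so these SE corners are automatically excluded, and I would verify consistency using the explicit one-line forms. Exception~(ii) comes from the identity $w(k,p,q) \leq w(k+q-1,\,p,\,\bar{q}+1)$ valid for $p>1$ and $q>0$ in the appropriate range of $k$; I would verify this directly by comparing one-line forms, or equivalently by computing $r_{w(k+q-1,p,\bar{q}+1)}(p,q) \geq k$. This dominance is active precisely when both $(q-1,\bar{p})$ and $(\bar{q},\bar{p})$ are SE corners with ranks differing by exactly $q-1$, matching the condition of Definition~\ref{d.essential}(ii). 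Putting it all together, the SE corners remaining after discarding the two exceptions are exactly those giving Bruhat-maximal basic elements below $w$.

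The main obstacle is proving that the two dominance moves singled out above are the only ones beyond the type A ones. In the symmetric-group case, Bruhat order among bigrassmannians is essentially coordinate-wise, but for $W_n$ the reflection move $(k,p,q) \mapsto (k+q-1,\,p,\,\bar{q}+1)$ destroys that simplicity, and one must check that no third ``hidden'' dominance exists that would further reduce the essential set. A case analysis on the four regimes in the explicit formula for $w(k,p,q)$, combined with the involution $(k,p,q) \leftrightarrow (k,q,p)$ for $q>0$ coming from $w(k,p,q)^{-1} = w(k,q,p)$, should suffice to rule out any further dominance and close the argument.
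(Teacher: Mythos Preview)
Your overall strategy is sound and the endpoint is correct, but the route differs from the paper's.  The paper does \emph{not} attempt a direct classification of Bruhat comparisons among basic signed permutations.  Instead it transports the problem to type~A via the embedding $\iota\colon W_n\hookrightarrow S_{2n+1}$ and invokes Kobayashi's theorem, which already gives $\Max(\iota(w))=\Ess(\iota(w))$.  The work then reduces to comparing $\Max(w)$ with $\Ess(\iota(w))\cap\mathscr{B}$, and the key technical step (Lemma~\ref{l.implies}) is a complete description of when $w(k',p',q')\geq w(k,p,q)$ in $W_\infty$ but $v(k',p',q')\not\geq v(k,p,q)$ in $S_\infty$.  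This is done by comparing the rank functions of $w(k',p',q')$ and $v(k',p',q')$ on the negative columns of the permutation matrix, which is a short case check.  Lemma~\ref{l.essential-redundant} then specializes this to the case where both triples already lie in $\Ess(\iota(w))$, yielding exactly the two exceptions in Definition~\ref{d.essential}.

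Your approach, by contrast, stays entirely inside $W_n$ and aims to describe the preorder $(k,p,q)\preceq(k',p',q')$ from scratch.  This is self-contained, avoiding the appeal to Kobayashi's result, but it costs more: the ``main obstacle'' you correctly identify --- ruling out any further hidden dominance beyond the reflection move $(k,p,q)\mapsto(k+q-1,p,\bar{q}+1)$ --- is precisely what the paper sidesteps by working relative to the known type~A order.  A case analysis across the four regimes for $w(k,p,q)$ can be made to work, but it duplicates effort already packaged in the type~A theorem, and you would also need to re-prove (or import) the fact that SE corners of $D^+(w)$ coincide with the type~A essential positions of $\iota(w)$ restricted to negative columns.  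If you want a fully self-contained argument, your route is reasonable; if economy is the goal, the paper's reduction to type~A plus the rank-function comparison of Lemma~\ref{l.implies} is shorter.
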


\noindent 
In light of the theorem, when no confusion seems likely, we will identify $\Ess(w)$ with the corresponding set of basic signed permutations $w(k,p,q)$.

The proof of Theorem~\ref{t.ess-diagram} is postponed to \S\ref{s.proof}.  In the rest of this section, we elaborate on some consequences and related facts.

An important property of the essential set is that $\Ess(w)$ determines $w$.  To state this more precisely, we use the notion of supremum in a poset.  Given a poset $P$ and a subset $Y\subseteq P$, an element $x\in P$ is the \define{supremum} of $Y$ if $x \geq y$ for all $y\in Y$, and if $x' \geq y$ for all $y\in Y$, then $x'\geq x$.  (See \cite[\S2.4]{gk}.)  A supremum is clearly unique if it exists; when it does exist, write $x=\sup(Y)$.

The following is an analogue of \cite[Lemmas~3.10 and 3.14]{fulton}.

\begin{theorem}\label{t.essential-b}
Let $w$ be a signed permutation.
\begin{enumerate}
\item $w$ is the supremum of its essential set.  

\medskip

\item The essential set $\Ess(w)$ is minimal with the property that $w=\sup(\Ess(w))$; in other words, $w \neq \sup(Y)$ for any $Y\subsetneq \Ess(w)$.  More precisely, choose any $(k_0,p_0,q_0) \in \Ess(w)$.  Then one can find an element $w'\in W_n$ such that
\begin{align*}
 r_{w'}(q_0,p_0) &< k_0, \quad \text{ and } \\
r_{w'}(q,p) & \geq k \quad \text{ for all }(k,p,q) \in \Ess(w)\setminus\{(k_0,p_0,q_0)\}.
\end{align*}

\medskip

\item If $Y$ is any set of basic signed permutations such that $\sup(Y)=w$, then $Y\supseteq\Ess(w)$.
\end{enumerate}
In particular, $\Ess(w)$ is the unique minimial set of basic elements whose supremum is $w$.
\end{theorem}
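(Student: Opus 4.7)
The plan is to derive all three parts from Theorem~\ref{t.ess-diagram} combined with the \emph{dissective} property of $W_n$, namely that every element is the supremum of the basic elements below it and that, for a basic element $x$ and any set $Y$ of basic elements, $x \leq \sup(Y)$ if and only if $x \leq y$ for some $y \in Y$ (the ``join-prime'' property of basic elements). This is a classical fact for the Weyl groups of types A, B, C developed by Lascoux--Sch\"utzenberger and Geck--Kim, as mentioned in the introduction and to be recalled in \S\ref{ss.bases}; I would just cite it.

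For part (i), Theorem~\ref{t.ess-diagram} immediately gives that $w$ is an upper bound for $\Ess(w)$. For any other upper bound $w'$, I would note that every basic $x\leq w$ satisfies $x\leq w(k,p,q)$ for some $(k,p,q)\in\Ess(w)$ by maximality, hence $x\leq w'$. The dissective property then identifies $w$ with the supremum of all basic elements below it, giving $w\leq w'$.

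For part (iii), I would fix $(k,p,q)\in\Ess(w)$ and apply join-primeness to the basic element $w(k,p,q)\leq w=\sup(Y)$ to produce $y\in Y$ with $w(k,p,q)\leq y$. Since $y$ is itself basic and lies below $w$, the maximality clause of Theorem~\ref{t.ess-diagram} forces $y=w(k,p,q)$, so $w(k,p,q)\in Y$. The concluding ``unique minimal'' assertion of the theorem is then immediate from (ii) and (iii).

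For part (ii), I would take $w':=\sup\bigl(\Ess(w)\setminus\{w(k_0,p_0,q_0)\}\bigr)$. By construction $w'\geq w(k,p,q)$ for each retained essential element, which translates (since $w(k,p,q)$ is the minimal element with $r(p,q)\geq k$, as noted in \S\ref{ss.basic}) into the inequalities $r_{w'}(p,q)\geq k$ for all $(k,p,q)\in\Ess(w)\setminus\{(k_0,p_0,q_0)\}$. Conversely, if $w(k_0,p_0,q_0)\leq w'$, join-primeness would produce another element $(k',p',q')\in\Ess(w)$ with $w(k_0,p_0,q_0)\leq w(k',p',q')$, contradicting the maximality supplied by Theorem~\ref{t.ess-diagram}; hence $r_{w'}(p_0,q_0)<k_0$. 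The main obstacle throughout is really the dissective/join-prime property itself---once that is in place, the argument is pure order-theoretic bookkeeping on top of Theorem~\ref{t.ess-diagram}.
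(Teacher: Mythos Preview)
Your strategy is the same as the paper's: everything is reduced to Theorem~\ref{t.ess-diagram} together with the dissective property of $W_n$ (established in Proposition~\ref{p.base} and the surrounding discussion). Parts (i) and (iii) go through exactly as you describe; the paper in fact does not spell out (iii), so your argument there is a welcome addition.

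There is one genuine wrinkle in your part (ii). Bruhat order on $W_n$ is \emph{not} a lattice---already in $W_2$ the simple reflections $s_0$ and $s_1$ have two incomparable minimal upper bounds $s_0s_1$ and $s_1s_0$---so your chosen element $w' := \sup\bigl(\Ess(w)\setminus\{w(k_0,p_0,q_0)\}\bigr)$ need not exist, and the join-prime statement you invoke becomes vacuous precisely when you need it. The paper avoids this by taking $w'$ to be the dissecting element $u(k_0,p_0,q_0,n)$ directly: since each remaining $w(k,p,q)\in\Ess(w)$ is incomparable to $w(k_0,p_0,q_0)$ by Theorem~\ref{t.ess-diagram}, the dissective decomposition $W_n = \{z \geq w(k_0,p_0,q_0)\} \sqcup \{z \leq u(k_0,p_0,q_0,n)\}$ forces $w(k,p,q) \leq u(k_0,p_0,q_0,n)$, so this concrete element is an upper bound for the remaining essential conditions while failing the omitted one by construction. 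Your join-prime reasoning is the contrapositive of this same step, so substituting $u(k_0,p_0,q_0,n)$ for your supremum repairs the argument with no further changes.
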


\noindent
In the course of proving this, we will establish a connection with the \emph{base} of the Coxeter group $W_n$, as well as the notion of essential set considered in \cite{rwy}.  This is done in \S\ref{ss.bases}.  First, we briefly digress to describe the connection with geometry.

\subsection{Rank conditions}\label{ss.ranks}

We will use the notation $C(k,p,q)$ to denote the rank condition $\dim(E_p\cap F_q)\geq k$.  The basic triples $(k,p,q)$ of type A (defined in \S\ref{ss.essential-typeA}) correspond to rank conditions $C(k,p,q)$ which are nontrivial and for which equality is feasible; similarly, the basic triples of type B (\S\ref{ss.essential-signed}) correspond to rank conditions which are nontrivial and feasible for isotropic subspaces.

Using the correspondence between Bruhat order and containment of Schubert varieties, the fact that $v(k,p,q)$ is the minimal permutation in $S_\infty$ satisfying $r_v(p,q)\geq k$ says the Schubert variety $\Omega_{v(k,p,q)}$ is defined by the single rank condition $C(k,p,q)$ inside the complete flag variety.  A little more generally, we have the following standard fact (cf.~\cite[Lemma~4.1 and Proposition~4.6]{rwy}):

\begin{lemma}
Let $(k,p,q)$ and $(k',p',q')$ be basic triples (of type A).  The rank condition $C(k,p,q)$ implies $C(k',p',q')$ if and only if $v(k,p,q)\geq v(k',p',q')$.  Furthermore, for a general permutation $v$, the condition $C(k,p,q)$ holds on the Schubert variety $\Omega_v$ if and only if $v \geq v(k,p,q)$.
\end{lemma}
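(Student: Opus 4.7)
The plan is to reduce both statements to a single geometric identification: the locus cut out by the single rank condition $C(k,p,q)$ coincides with the Schubert variety $\Omega_{v(k,p,q)}$. Granting this, both claims become formal consequences of the standard correspondence between Bruhat order and containment of Schubert varieties (with the larger element in Bruhat order corresponding to the smaller Schubert variety, since $\Omega_v$ has codimension $\ell(v)$).

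To establish $\{E_\bullet : \dim(E_p\cap F_q)\geq k\} = \Omega_{v(k,p,q)}$, one inclusion is built into the definition: $\Omega_{v(k,p,q)}$ is cut out by the conditions $\dim(E_{p'}\cap F_{q'}) \geq r_{v(k,p,q)}(p',q')$ for all $(p',q')$, which in particular includes $\dim(E_p\cap F_q)\geq k$, since $r_{v(k,p,q)}(p,q)=k$ by the defining property of $v(k,p,q)$. For the reverse inclusion, I would use the Bruhat decomposition: every flag lies in a unique Schubert cell $C_{v'}$, on which the rank function coincides identically with $r_{v'}$. If $E_\bullet$ satisfies $\dim(E_p\cap F_q)\geq k$, then $r_{v'}(p,q)\geq k$, and by the characterization of $v(k,p,q)$ as the minimum element of Bruhat order with this property, $v'\geq v(k,p,q)$, whence $E_\bullet \in \Omega_{v(k,p,q)}$.

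With this identification in place, Part~2 is immediate: $C(k,p,q)$ holds on all of $\Omega_v$ iff $\Omega_v \subseteq \Omega_{v(k,p,q)}$ iff $v\geq v(k,p,q)$. Part~1 follows by applying the same identification to both triples: the implication $C(k,p,q) \Rightarrow C(k',p',q')$ of conditions on flags says exactly that $\Omega_{v(k,p,q)} \subseteq \Omega_{v(k',p',q')}$, which is the Bruhat-order inequality $v(k,p,q)\geq v(k',p',q')$.

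The only substantive step I am leaning on is the converse direction of minimality: $v'\geq v(k,p,q)$ should force $r_{v'}(p,q)\geq k$, equivalently, $\{v' : r_{v'}(p,q)\geq k\}$ must be closed upward in Bruhat order. This is essentially the Ehresmann--tableau criterion for Bruhat order in type A (monotonicity of rank functions under Bruhat order) and, together with the explicit minimality of $v(k,p,q)$ asserted in the definition, is the only non-formal ingredient needed; everything else is bookkeeping.
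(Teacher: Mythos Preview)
Your argument is correct. The paper does not actually prove this lemma: it is stated as a ``standard fact'' with a reference to \cite[Lemma~4.1 and Proposition~4.6]{rwy}, so there is no proof in the paper to compare against. Your approach---identifying the locus cut out by $C(k,p,q)$ with the Schubert variety $\Omega_{v(k,p,q)}$ via the Bruhat decomposition, and then invoking the Bruhat-order/containment correspondence---is the standard one and is essentially what the cited reference does.

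One small remark on your final paragraph: the upward-closure of $\{v' : r_{v'}(p,q)\geq k\}$ (monotonicity of rank functions in Bruhat order) is indeed the fact that makes the phrase ``\emph{the} minimal element'' in the definition of $v(k,p,q)$ meaningful. But once you accept that definition as given, your argument for the identification $\{C(k,p,q)\}=\Omega_{v(k,p,q)}$ does not actually use the direction $v'\geq v(k,p,q)\Rightarrow r_{v'}(p,q)\geq k$; it only uses the converse (minimality) together with $C_{v'}\subseteq \Omega_{v(k,p,q)}$ for $v'\geq v(k,p,q)$. So the concern you flag is really about the well-definedness of $v(k,p,q)$ rather than a missing step in your deduction.
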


The analogous statement for signed permutations and rank conditions on isotropic flags is a consequence of the following lemma.

\begin{lemma}\label{l.unique}
For a basic triple $(k,p,q)$ of type B, the corresponding basic signed permutation $w(k,p,q)$ is the unique minimum among elements $w\in W_n$ such that $r_w(p,q) \geq k$.
\end{lemma}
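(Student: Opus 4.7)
My plan has two main parts: first, directly verify that $w(k,p,q)$ itself satisfies $r_{w(k,p,q)}(p,q) = k$, and second, use the tableau criterion for Bruhat order to show that every other signed permutation with the required rank property dominates $w(k,p,q)$. For the first part, I would inspect each of the four cases in the explicit table defining $w(k,p,q)$ and count the positions $i \geq p$ at which the value is $\leq \bar q$. In the generic case $q \geq p > 0$, the block $\overline{q+k-1}, \ldots, \bar q$ sits at positions $p, p+1, \ldots, p+k-1$ and contributes all $k$ such positions, while the remaining entries $p, \ldots, q-1$ are strictly greater than $\bar q$; the other three cases are analogous, and the same inspection shows that $w(k,p,q) \in W_n$ precisely for $n \geq n(k,p,q)$.

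For the main step, I would appeal to the tableau criterion: the embedding $\iota\colon W_n \hookrightarrow S_{2n+1}$ preserves Bruhat order, the rank functions of $w$ and $\iota(w)$ agree where both are defined, and the classical tableau criterion on $S_{2n+1}$ characterizes Bruhat domination by pointwise rank-function domination. So it suffices to prove the minimality statement: the single assumption $r_w(p,q) \geq k$ forces $r_w(p',q') \geq r_{w(k,p,q)}(p',q')$ for every valid $(p',q')$. This can be extracted from three structural constraints shared by all signed-permutation rank functions: a Lipschitz-type bound preventing the rank from decaying too fast away from $(p,q)$; the reflection identity $r_{\iota(w)}(\bar p + 1, \bar q + 1) = r_{\iota(w)}(p,q) + p + q - 1$ established in the proof of Lemma~\ref{l.symmetric-essential}; and the isotropy constraint $r_w(p, \bar q + 1) \leq r_w(p,q) + q - 1$ for $p, q > 0$ (the one responsible for Exception~(ii) of Definition~\ref{d.essential}). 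The basic element $w(k,p,q)$ is engineered precisely so that its rank function saturates the lower bounds these constraints impose given $r(p,q) = k$.

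The main obstacle is the case-by-case verification of this minimality statement, which naturally follows the four regimes of the table defining $w(k,p,q)$; the reflection symmetry essentially halves the work. An attractive alternative is to show that $\iota(w(k,p,q))$ coincides with the supremum in $S_{2n+1}$ of the two type A basic permutations $v(k,p,q)$ and $v(k+p+q-1, \bar p + 1, \bar q + 1)$, whereupon minimality of $w(k,p,q)$ in $W_n$ would follow immediately from the known type A statement pulled back through $\iota$; however, this too requires comparing the explicit one-line formulas, so the bookkeeping is not really avoided. Finally, uniqueness of the minimum is automatic once its existence is established, since any two minima would dominate one another and hence coincide.
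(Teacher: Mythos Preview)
Your main approach is correct in outline and is genuinely different from the paper's.  The paper gives a \emph{geometric} proof: it shows that the locus in the orthogonal flag variety cut out by the single rank condition $C(k,p,q)$ is irreducible of codimension exactly $\ell(w(k,p,q))$, by constructing an explicit resolution $\tilde Z \to Z$ (pairs $(E_p,L)$ with $L\subseteq E_p\cap F_q$) which is a Grassmannian bundle over a Grassmannian; since this locus is then a Schubert variety of the right codimension, it must equal $\Omega_{w(k,p,q)}$, and the Bruhat statement follows.  Your approach instead stays inside combinatorics, using the rank-function characterization of Bruhat order and the structural constraints (Lipschitz, reflection, isotropy) on rank functions of signed permutations to deduce pointwise domination from the single inequality $r_w(p,q)\geq k$.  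The geometric argument is short and uniform but imports Schubert-variety machinery; your argument is self-contained but requires the case analysis you acknowledge.

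One concrete warning about your ``attractive alternative'': the claim that $\iota(w(k,p,q))$ equals the supremum in $S_{2n+1}$ of $v(k,p,q)$ and $v(k,p,q)^\perp$ is \emph{false} in general.  The paper addresses exactly this in Remark~\ref{r.correct}: for $w(2,2,\bar1)=3\,\bar2\,1$, the image $\iota(w)$ is strictly larger than the supremum of the two type~A bigrassmannians, and the discussion after Lemma~\ref{l.implies} explains that a third type~A element $v(k'+q',p',\bar{q'}+1)$ is sometimes needed.  So if you pursued that route you would need to take the supremum of a (triple-dependent) collection of two or three type~A basic permutations, not just the reflected pair; the bookkeeping does not collapse as cleanly as you hoped.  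Your primary tableau-criterion plan is unaffected by this and remains sound.
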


Indeed, Lemma~\ref{l.unique} is equivalent to the statement: the rank condition $C(k,p,q)$ holds on $\Omega_w$ if and only if $w\geq w(k,p,q)$.  We give a proof using a geometric construction.

\begin{proof}
We claim that the subvariety of the orthogonal flag variety defined by the rank condition $C(k,p,q)$ is irreducible and has codimension equal to $\ell(w(k,p,q))$.  It is therefore equal to the corresponding Schubert variety $\Omega_{w(k,p,q)}$, so the statement about Bruhat order follows.  

To prove the claim, it is enough to show the corresponding locus in the orthogonal Grassmannian is irreducible of the correct codimension.  This is an exercise in resolving singularities.  Let $Z\subseteq X=OG(n+1-p,2n+1)$ be the (reduced) subset defined by $\dim(E_p\cap F_q)\geq k$, where $F_\bullet$ is a fixed isotropic flag.  This is the closure of the locus $Z^\circ$ where equality holds.  If $q>0$, then $F_q$ is isotropic.  In this case, let $\tilde{Z} \subset X \times Gr(k,F_q)$ be pairs $(E_p,L)$ such that $L\subseteq E_p$ and $L$ is a $k$-dimensional subspace of $F_q$.  The first projection has image $Z$, and it is an isomorphism over the open subset $Z^\circ$.  The second projection is surjective onto $Gr(k,F_q)$, with orthogonal Grassmannians $OG(n+1-p-k,2n-2k+1)$ as fibers; in particular, $\tilde{Z}$ is smooth and irreducible.  Comparing the dimensions of $\tilde{Z}$ and $X$, we find that $Z$ has codimension $(p+q-1)k + \binom{k}{2}$, which is equal to $\ell(w(k,p,q))$ as claimed.  The cases $k>\bar{q}>0$ and $\bar{q}\geq k$ are similar.
\end{proof}

Lemma~\ref{l.unique} and Theorem~\ref{t.essential-b} imply a shorter description of Schubert varieties in the orthogonal flag variety.

\begin{corollary}\label{c.essential-b}
The Schubert variety $\Omega_w$ is defined by the rank conditions $C(k,p,q)$ for basic triples $(k,p,q)$ in $\Ess(w)$, and this is a minimal list of rank conditions.  
\end{corollary}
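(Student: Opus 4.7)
The plan is to combine Lemma~\ref{l.unique} with the two parts of Theorem~\ref{t.essential-b}. By Lemma~\ref{l.unique}, for each basic triple $(k,p,q)$ the single rank condition $C(k,p,q)$ cuts out the Schubert variety $\Omega_{w(k,p,q)}$ inside the orthogonal flag variety. Thus the locus defined by the essential rank conditions is
\[
\Omega' \;=\; \bigcap_{(k,p,q)\in\Ess(w)}\Omega_{w(k,p,q)},
\]
and the corollary amounts to showing (a) $\Omega' = \Omega_w$ and (b) no triple can be dropped from $\Ess(w)$ without enlarging $\Omega'$.

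For (a): the containment $\Omega_w \subseteq \Omega'$ is immediate since $w \geq w(k,p,q)$ for each $(k,p,q)\in\Ess(w)$. For the reverse, I would invoke the Bruhat stratification of the orthogonal flag variety: every $E_\bullet$ lies in a unique open Schubert cell $\Omega_u^\circ$, and $E_\bullet\in\Omega_v$ if and only if $u\geq v$. Hence $E_\bullet\in\Omega'$ if and only if $u$ is an upper bound in Bruhat order for all basic signed permutations attached to $\Ess(w)$. By Theorem~\ref{t.essential-b}(i), $w=\sup\Ess(w)$, so being such an upper bound is equivalent to $u\geq w$, that is, to $E_\bullet\in\Omega_w$.

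For (b): fix any $(k_0,p_0,q_0)\in\Ess(w)$ to be removed. Theorem~\ref{t.essential-b}(ii) produces a signed permutation $w'\in W_n$ whose rank function attains the required essential values on every triple in $\Ess(w)$ except $(k_0,p_0,q_0)$. Taking $E_\bullet$ in the open cell $\Omega_{w'}^\circ$, we have $\dim(E_p\cap F_q) = r_{w'}(p,q)$ for all relevant $p,q$, so $E_\bullet$ satisfies $C(k,p,q)$ for every remaining essential triple but fails $C(k_0,p_0,q_0)$. Such a flag lies in the locus cut out by $\Ess(w)\setminus\{(k_0,p_0,q_0)\}$ but not in $\Omega_w$, proving that the dropped condition is not implied by the others.

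The only non-routine ingredient is the geometric dictionary used in part (a): the standard fact that for $E_\bullet\in\Omega_u^\circ$ the ranks $\dim(E_p\cap F_q)$ equal $r_u(p,q)$ exactly, and that $\Omega_u = \bigsqcup_{v\geq u}\Omega_v^\circ$. With this in hand the argument is mechanical; the real combinatorial content is already encapsulated in Theorem~\ref{t.essential-b} and Lemma~\ref{l.unique}, which is why I expect no further obstacle.
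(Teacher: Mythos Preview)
Your proposal is correct and matches the paper's approach: the paper does not spell out a proof of the corollary, but simply states that it follows from Lemma~\ref{l.unique} and Theorem~\ref{t.essential-b}, and the argument you give is exactly the natural way to unpack that implication via the Bruhat decomposition and the dictionary between Bruhat order and containment of Schubert varieties.
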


\subsection{Bases}\label{ss.bases}

As defined by Lascoux and Sch\"utzenberger \cite{ls2}, the \define{base} of a finite Coxeter group $W$ (or any finite poset) is the set of non-identity elements $x$ which cannot be written as the supremum of any subset $Y\subseteq W$ not containing $x$.\footnote{The elements of the base are sometimes called \emph{join-irreducible} elements in the poset literature.}  This notion was developed further by Geck and Kim \cite{gk}, who identify the base for each finite Coxeter group, and also by Reading \cite{reading}.

In type A, the base of the symmetric group is the set of all bigrassmannian permutations (\cite[Th\'eor\`eme~4.4]{ls2}).  Below we will prove that the basic elements $w(k,p,q)$ form the base of $W_n$, giving a new description of the base (cf.~\cite[Theorem 4.6]{gk} for another description).

\begin{remark}\label{r.correct}
In the first statement of \cite[Th\'eor\`eme 7.4]{ls2}, it is claimed that the base of $W_n$ can also be described as those $w$ such that $\iota(w)$ is the supremum of elements $v(k,p,q)$ and $v(k,p,q)^\perp$ in $S_{2n+1}$, for some type A basic triple $(k,p,q)$.\footnote{More precisely, their claim refers to an embedding of $w$ in $S_{2n}$, but this version is equivalent.}  However, the example of $w(2,2,\bar{1}) = 3\;\bar{2}\;1$ shows that this is not the case.  The gap lies in \cite[Lemme 7.3]{ls2}, which is clarified below in the discussion following Lemma~\ref{l.implies}.
\end{remark}

A finite Coxeter group (or poset) is \define{dissective} if for every element $x$ of the base, the complement $W \setminus \{x'\,|\, x'\geq x \}$ has a unique maximal element $u$; in other words, there is a disjoint decomposition $W = \{x'\,|\,x'\geq x \} \sqcup \{u'\,|\,u' \leq u\}$.  If such a $u$ exists, we call it the \define{dissecting element} associated to $x$.  The groups $S_n$ and $W_n$ (of types A and B) are dissective, but the Weyl group of type D is not \cite{ls2,gk}.

Given a basic triple $(k,p,q)$, and an integer $n\geq n(k,p,q)$ (so that $w(k,p,q)\in W_n$), set
\[
  u(k,p,q,n) = w(n+2-p-k,\, p,\, \bar{q}+1 ) \cdot  w_\circ^{(n)},
\]
replacing ``$\bar{q}+1$'' with ``$1$'' when $q=1$.  This is the maximal element of $W_n$ with $r_u(q,p)=k-1$.  Note that although the basic signed permutation $w(k,p,q)$ does not depend on $n$, the element $u(k,p,q,n)$ necessarily does.  We will soon see that these are the dissecting elements.  First, we give another characterization of these signed permutations.

\begin{lemma}\label{l.unique2}
Let $(k,p,q)$ be a basic triple of type B, and fix an integer $n$ with $n\geq n(k,p,q) := \max\{p+k-1,\,q+k-1\}$.  The element $u(k,p,q,n)$ is the unique maximum among elements $w\in W_n$ such that $r_w(p,q) < k$.
\end{lemma}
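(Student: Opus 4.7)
The plan is to reduce the statement to Lemma~\ref{l.unique} via the Bruhat-order-reversing involution $w \mapsto w \cdot w_\circ^{(n)}$ on $W_n$. Since multiplication by the longest element reverses Bruhat order, $u \in W_n$ is the unique maximum of $\{w \in W_n : r_w(p,q) < k\}$ if and only if $u \cdot w_\circ^{(n)}$ is the unique minimum of $\{w \cdot w_\circ^{(n)} : r_w(p,q) < k\}$. I aim to identify this latter set as $\{v \in W_n : r_v(p, \bar q + 1) \geq n+2-p-k\}$, after which Lemma~\ref{l.unique} immediately furnishes $w(n+2-p-k, p, \bar q + 1)$ as its unique minimum.

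To this end, the key computation is the transformation of the rank function under right multiplication by $w_\circ^{(n)}$. Using $(w \cdot w_\circ^{(n)})(i) = \overline{w(i)}$ for $i>0$, a direct calculation gives $r_{w \cdot w_\circ^{(n)}}(p, \bar q + 1) = \#\{p \leq i \leq n : w(i) > \bar q\}$. Since the subsets $\{i : w(i) \leq \bar q\}$ and $\{i : w(i) > \bar q\}$ partition $\{p, p+1, \ldots, n\}$, we obtain the identity
\[
  r_w(p, q) \,+\, r_{w \cdot w_\circ^{(n)}}(p, \bar q + 1) \,=\, n - p + 1,
\]
whence $r_w(p,q) < k$ if and only if $r_{w \cdot w_\circ^{(n)}}(p, \bar q + 1) \geq n+2-p-k$. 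The exceptional case $q=1$ (where $\bar q + 1 = 0$) goes through verbatim with $\bar q + 1$ replaced by $1$, using that $w(i) \neq 0$ partitions $\{p, \ldots, n\}$ into positive and negative values. Combining the order reversal with Lemma~\ref{l.unique} then identifies $u(k,p,q,n) = w(n+2-p-k, p, \bar q + 1) \cdot w_\circ^{(n)}$ as the unique maximum, as claimed.

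A routine verification is that the triple $(n+2-p-k, p, \bar q + 1)$ is a valid input to Lemma~\ref{l.unique}: most conditions for a basic triple of type B, as well as the bound $n(n+2-p-k, p, \bar q + 1) \leq n$, follow from those on $(k,p,q)$ together with $n \geq n(k,p,q)$ by a short inequality chase. The main obstacle I anticipate is the borderline case $p = 1$ with $q \geq 2$, where $\bar q + 1 \leq -1$ and hence the condition ``if $p'=1$ then $q'>0$'' fails. Here the explicit formula in the table nonetheless defines a well-formed signed permutation in $W_n$, and the geometric construction in the proof of Lemma~\ref{l.unique} (resolving the rank-condition locus as the image of a Grassmann bundle over an orthogonal Grassmannian) applies without change, so the uniqueness-of-minimum conclusion persists in this edge case as well.
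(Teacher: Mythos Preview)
Your proposal is correct and follows precisely the approach the paper indicates: the paper's proof consists of the single sentence ``This is an easy consequence of Lemma~\ref{l.unique}, using the involution $w_\circ^{(n)}$,'' and you have carefully supplied the details of that reduction, including the rank-function identity $r_w(p,q)+r_{ww_\circ^{(n)}}(p,\bar q+1)=n-p+1$ and the edge cases $q=1$ and $p=1$. Your handling of the borderline case $p=1$, $q\geq 2$ (where the target triple is not formally basic) by appealing to the geometric argument of Lemma~\ref{l.unique} is sound, since that proof nowhere uses the clause ``if $p=1$ then $q>0$.''
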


\noindent
Equivalently, $C(k,p,q)$ fails on $\Omega_w$ if and only if $w\leq u(k,p,q,n)$.  This is an easy consequence of Lemma~\ref{l.unique}, using the involution $w_\circ^{(n)}$.

An alternative characterization of the base is given in \cite[\S2.4]{gk}: it is the set of elements $w$ which are minimal in the complement of an interval $\{u'\,|\, u'\leq u\}$, for some $u$.  In general, this $u$ is not required to be uniquely determined by $w$, but for dissective posets, there is a unique maximal such $u$ for each $w$.

\begin{proposition}\label{p.base}
The basic signed permutations in $W_n$ form the base of $W_n$.
\end{proposition}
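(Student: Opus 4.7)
The plan is to verify each direction of the equivalence ``$w$ lies in the base of $W_n$ iff $w = w(k,p,q)$ for some basic triple $(k,p,q)$'' using the supremum definition of base, with the main inputs being Lemmas~\ref{l.unique} and~\ref{l.unique2} (identifying basic and dissecting elements as the extrema of rank-condition loci) together with Theorem~\ref{t.essential-b}(i) (expressing any $w$ as the supremum of its essential set).

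First I would show that every basic signed permutation $w(k,p,q)$ lies in the base. Suppose for contradiction that $w(k,p,q) = \sup(Y)$ for some $Y \subseteq W_n$ not containing $w(k,p,q)$. Every $y \in Y$ then satisfies $y < w(k,p,q)$, so $y \not\geq w(k,p,q)$; by Lemma~\ref{l.unique} this is equivalent to $r_y(p,q) < k$, and then by Lemma~\ref{l.unique2} to $y \leq u(k,p,q,n)$. Taking the supremum over $Y$ gives $w(k,p,q) = \sup(Y) \leq u(k,p,q,n)$. But applying Lemma~\ref{l.unique2} in the reverse direction to $w(k,p,q)$ itself yields $w(k,p,q) \not\leq u(k,p,q,n)$, since $r_{w(k,p,q)}(p,q) = k \not< k$. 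This contradiction forces $w(k,p,q)$ to lie in the base.

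Conversely, suppose $w \in W_n$ lies in the base. By Theorem~\ref{t.essential-b}(i), $w = \sup(\Ess(w))$, where, under the identification following Theorem~\ref{t.ess-diagram}, $\Ess(w)$ is viewed as a set of basic signed permutations. The defining property of a base element forbids $w$ from being the supremum of a set not containing it, so $w$ must itself belong to this set. Hence $w = w(k_0, p_0, q_0)$ for some basic triple $(k_0, p_0, q_0)$, i.e., $w$ is a basic signed permutation.

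The proof is a short assembly of facts already in place: the rank-condition characterizations of $w(k,p,q)$ and $u(k,p,q,n)$ interlock perfectly via the dichotomy $r_w(p,q) \geq k$ or $r_w(p,q) < k$, while Theorem~\ref{t.essential-b}(i) supplies the forced supremum decomposition. The substantive work has already been done in those earlier results; I do not foresee a distinct obstacle at this step, although one should be careful, when writing it up, to invoke Lemma~\ref{l.unique2} in both directions (once to confine $Y$ below $u(k,p,q,n)$, and once to observe that $w(k,p,q)$ itself escapes that interval).
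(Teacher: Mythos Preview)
Your first inclusion (every basic $w(k,p,q)$ lies in the base) is correct and is essentially the paper's argument recast in the supremum language: both rest on the dichotomy $W_n = \{w' \mid w' \geq w(k,p,q)\} \sqcup \{w' \mid w' \leq u(k,p,q,n)\}$ supplied by Lemmas~\ref{l.unique} and~\ref{l.unique2}.

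The second inclusion, however, is circular as written. You invoke Theorem~\ref{t.essential-b}(i), but in the paper's logical order that theorem is proved \emph{after} Proposition~\ref{p.base}, and its proof explicitly uses Proposition~\ref{p.base}: the phrase ``by Theorem~\ref{t.ess-diagram} together with Proposition~\ref{p.base}'' is precisely where the identification of base elements with basic elements enters. So Theorem~\ref{t.essential-b}(i) is not available to you here. The paper sidesteps the reverse inclusion entirely by a counting argument: the number of basic signed permutations in $W_n$ is $(2n^3+n)/3$ by \eqref{e.basic-enum}, and this matches the known cardinality of the base from \cite{ls2,gk}, so the single inclusion already forces equality. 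If you prefer a direct, non-circular argument for your second paragraph, you can bypass Theorem~\ref{t.essential-b}(i) and instead use the rank-function characterization of Bruhat order on $W_n$ (via $\iota$ and \cite[\S8.1]{bb}) together with Lemma~\ref{l.unique} to show $w = \sup\{\,w(k,p,q) : w(k,p,q) \leq w\,\}$ directly; then the base property forces $w$ to belong to this set of basic elements.
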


\begin{proof}
The number of basic signed permutations is equal to the cardinality of the base (see \cite[p.~24, Remarque]{ls2} or \cite[p.~300]{gk}), so we only need to establish one inclusion.  We will show that every basic signed permutation lies in the base.

Since every element $w\in W_n$ has either $r_w(p,q)\geq k$ or $r_w(p,q)<k$, Lemmas~\ref{l.unique} and \ref{l.unique2} show that
\[
  W_n = \{ w\,|\, w\geq w(k,p,q) \} \sqcup \{ w\,|\, w\leq u(k,p,q,n) \}.
\]
In particular, $w(k,p,q)$ is the (unique) minimal element in the complement of the interval $\{ w\,|\, w\leq u(k,p,q,n) \}$, so it lies in the base.
\end{proof}

The proof also shows that $W_n$ is dissective: since every element of the base is of the form $w(k,p,q)$, the fact that $u(k,p,q,n)$ is the unique maximum in the complement of $\{ w\,|\, w\geq w(k,p,q) \}$ shows that $u(k,p,q,n)$ is the dissecting element corresponding to $w(k,p,q)$.

We now turn to Theorem~\ref{t.essential-b}.

\begin{proof}[Proof of Theorem~\ref{t.essential-b}]
In general, an element $w$ of a finite poset is equal to the supremum of the set of base elements lying below $w$ \cite[\S2.4]{gk}.  It is sufficient to include the maximal such elements, and by Theorem~\ref{t.ess-diagram} together with Proposition~\ref{p.base}, for $w\in W_n$, these constitute $\Ess(w)$; therefore $w=\sup(\Ess(w))$.  

For the second statement in (ii), concerning minimality, observe that $w(k_0,p_0,q_0) \not\leq w(k,p,q)$ for any $(k,p,q) \in \Ess(w)\setminus\{(k_0,p_0,q_0)\}$, because by Theorem~\ref{t.ess-diagram}, elements of the essential set are incomparable.  By the dissective property, it follows that $w(k,p,q) \leq u(k_0,p_0,q_0,n)$ for all such $(k,p,q)$, and therefore the intersection
\[
  \{ w\,|\, w\leq u(k_0,p_0,q_0,n) \} \cap \bigcap_{(k,p,q)\in\Ess(w)\setminus\{(k_0,p_0,q_0)\}} \{ w\,|\, w\geq w(k,p,q) \}
\]
is nonempty.  Any element of this intersection can be taken as the desired $w'$; for instance, $w'=u(k_0,p_0,q_0,n)$ works.
\end{proof}

\subsection{A bijection}\label{ss.biject}

Finally, we make the relationship with the essential set of \cite{rwy} precise, for both permutations and signed permutations.

For the purposes of this subsection only, let us consider permutations in $S_n$ as acting on $\{1,\ldots,n\}$ in the usual way; the definitions and conventions for signed permutations in $W_n$ remain as before.  This results in some changes of notation for type A, but no further complications.  The rank function for $v \in S_n$ is redefined as $r_v(p,q) = \#\{ i\leq p\,|\, v(i) > q\}$, and basic triples of type A have $k,p,q>0$ and $p\geq k > p-q$.  In one-line notation, the corresponding basic permutation $v(k,p,q)$ is
\[
  v(k,p,q) = 1,\;\cdots,\;p-k,\;q+1,\;\cdots\;q+k,\;p-k+1,\;\cdots,\;q.
\]
For example, $v(3,4,2) = 1\;3\;4\;5\;2$.

As mentioned before, basic permutations are bigrassmannian, using the fact that the inverse of $v(k,p,q)$ is equal to $v(p+q-k,q,p)$, and they account for all bigrassmannians in $S_n$.

The assignment of $v(k,p,q)$ to a basic triple is one-to-one.  It is easy to describe the inverse map: Given a bigrassmannian permutation, record the position of the unique descent as $p$; starting at this position, collect all the consecutive entries to the left of it---let $k$ be the number of them, and let $q+1$ be the smallest one.  Furthermore, the smallest $n$ such that $v(k,p,q)$ lies in $S_n$ is $q+k$.  These two facts make it easy to enumerate the basic permutations in $S_n$ (or equivalently, the bigrassmannians)---there are $\frac{1}{6}(n^3-n) = \binom{n+1}{3}$ of them.

We will need the dissecting elements for $S_n$, which are defined similarly to the elements $u(k,p,q,n)$ for $W_n$.  Given a basic triple (of type A) $(k,p,q)$ and $n\geq q+k$, so $v(k,p,q) \in S_n$, set $t(k,p,q,n) = v(n+1-q-k, n-p, q)\cdot w_\circ^{(n)}$.  (This is the maximal element in $S_n$ having $r_t(q,p) < k$.)  

The diagram of a permutation in $S_n$ is defined as usual, via the permutation matrix with dots in positions $(v(i),i)$.  The essential set of $v \in S_n$ is the set of basic triples $(k,p,q)$ such that $(q,p)$ is a SE corner of the diagram of $v$ and $k=r_v(p,q)$.

Given a permutation $v\in S_n$, let $\RWY(v)$ denote the set of all permutations $t\in S_n$ which are minimal (in Bruhat order) among those not below $v$.  For $w\in W_n$, define $\RWY(w)$ analogously.  We write $w_\circ^{(n)}$ for the longest element of $S_n$ or $W_n$, depending on context.

\begin{proposition}\label{p.rwy-equivalent}
Fix $n$.
\begin{enumerate}
\item For any $v\in S_n$, the map sending $(k,p,q) \in \Ess(vw_\circ^{(n)})$ to $t(k,p,q,n)\cdot w_\circ^{(n)}$ is a bijection onto $\RWY(v)$.

\smallskip

\item For any $w\in W_n$, the map sending $(k,p,q) \in \Ess(ww_\circ^{(n)})$ to $u(k,p,q,n)\cdot w_\circ^{(n)}$ is a bijection onto $\RWY(w)$.
\end{enumerate}
\end{proposition}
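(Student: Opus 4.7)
The plan is to use the order-reversing involution $x\mapsto x\cdot w_\circ^{(n)}$ together with the dissective structure of $W_n$ (resp.\ $S_n$) from \S\ref{ss.bases}. I focus on statement~(ii); part~(i) goes through identically after replacing $u(k,p,q,n)$ with $t(k,p,q,n)$ and invoking the type A analogues of Lemmas~\ref{l.unique} and~\ref{l.unique2}. Set $w' = ww_\circ^{(n)}$. Since $x\leq y$ iff $xw_\circ^{(n)} \geq yw_\circ^{(n)}$, the set $\RWY(w)\cdot w_\circ^{(n)}$ consists precisely of the maximal elements of $\{x\in W_n \,|\, x\not\geq w'\}$, so the proposition reduces to showing that $(k,p,q) \mapsto u(k,p,q,n)$ bijects $\Ess(w')$ with this set of maximal elements.

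For the forward direction, fix $(k_0,p_0,q_0)\in\Ess(w')$. Lemma~\ref{l.unique2} gives $u(k_0,p_0,q_0,n) \not\geq w(k_0,p_0,q_0)$, and hence $u(k_0,p_0,q_0,n) \not\geq w'=\sup(\Ess(w'))$. By Theorem~\ref{t.ess-diagram}, distinct elements of $\Ess(w')$ are pairwise incomparable (since each is maximal among basic elements below $w'$), so dissectivity forces $w(k,p,q)\leq u(k_0,p_0,q_0,n)$ for every other $(k,p,q)\in\Ess(w')$. If $s > u(k_0,p_0,q_0,n)$, then $s\geq w(k_0,p_0,q_0)$ by Lemma~\ref{l.unique2}, while $s > u(k_0,p_0,q_0,n) \geq w(k,p,q)$ for the remaining triples; hence $s\geq\sup(\Ess(w'))=w'$. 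This makes $u(k_0,p_0,q_0,n)$ maximal in $\{x\,|\, x\not\geq w'\}$.

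For the reverse direction, if $t'$ is maximal in $\{x\,|\, x\not\geq w'\}$, then $t'$ cannot dominate every element of $\Ess(w')$ (else $t'\geq w'$), so $t'\not\geq w(k_0,p_0,q_0)$ for some $(k_0,p_0,q_0)\in\Ess(w')$; Lemma~\ref{l.unique2} then gives $t'\leq u(k_0,p_0,q_0,n)$, and the forward direction combined with maximality of $t'$ forces $t' = u(k_0,p_0,q_0,n)$. Injectivity of $(k,p,q)\mapsto u(k,p,q,n)$ follows from injectivity of $(k,p,q)\mapsto w(k,p,q)$ and the explicit formula for $u$. Right-multiplying by $w_\circ^{(n)}$ then yields the bijection onto $\RWY(w)$ asserted in the proposition.

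The main point requiring care is the implication that any $s$ strictly above $u(k_0,p_0,q_0,n)$ dominates \emph{every} element of $\Ess(w')$---not merely $w(k_0,p_0,q_0)$---which uses the pairwise incomparability of essential elements (from Theorem~\ref{t.ess-diagram}) together with dissectivity; every other ingredient is a direct application of the lemmas and theorems already in hand.
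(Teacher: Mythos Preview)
Your argument is correct. The paper does not give a self-contained proof here---it simply points to \cite[Proposition~4.6]{rwy} and explains the dictionary needed to translate conventions---whereas you unpack that reference directly, using the order-reversing involution $x\mapsto xw_\circ^{(n)}$ together with the dissective decomposition (Lemmas~\ref{l.unique} and~\ref{l.unique2}, Proposition~\ref{p.base}) and the characterization $w'=\sup(\Ess(w'))$ from Theorems~\ref{t.ess-diagram} and~\ref{t.essential-b}. This is precisely the content of the cited argument, so the two approaches coincide in substance; yours has the advantage of being self-contained within the paper's framework. One small remark: your appeal to the explicit formula for injectivity of $(k,p,q)\mapsto u(k,p,q,n)$ is fine, but injectivity on $\Ess(w')$ already falls out of the dissective decomposition, since $u(k,p,q,n)=u(k',p',q',n)$ with $(k,p,q)\neq(k',p',q')$ in $\Ess(w')$ would force $w(k',p',q')\leq u(k',p',q',n)$, contradicting the disjointness in Proposition~\ref{p.base}.
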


The proof is a translation of the argument in \cite[Proposition~4.6]{rwy}.  The factor of $w_\circ$ arises from the fact that their Schubert varieties $X_v$ correspond to our $\Omega_{vw_\circ}$; furthermore, our $E_p$ is their $(\C^n/V_{n-p})^*$.  (Combining these two identifications explains why $X_v$ appears to be identified with $\Omega_{w_\circ v}$ in \cite{rwy}.)  Similarly, their $X_w$ is our $\Omega_{ww_\circ}$ in the orthogonal flag variety.

\begin{example}
To illustrate, we compare with \cite[Example 4.7]{rwy}.  Here $n=6$, and $v=4\;2\;5\;1\;6\;3$.  They show that
\[
  \RWY( 4\;2\;5\;1\;6\;3 ) = \{ 1\;2\;3\;6\;4\;5, \;  1\;3\;4\;5\;2\;6, \; 1\;5\;2\;3\;4\;6, \; 3\;4\;1\;2\;5\;6 \}.
\]
Now $vw_\circ = 3\;6\;1\;5\;2\;4$, and one can compute
\[
  \Ess( 3\;6\;1\;5\;2\;4 ) = \{ (1,2,5), \; (2,2,2),\; (2,4,4),\; (3,4,2) \}.
\]
The corresponding dissecting elements $t(k,p,q,6)$ are
\[
  5\;4\;6\;3\;2\;1, \; 6\;2\;5\;4\;3\;1, \; 6\;4\;3\;2\;5\;1, \; 6\;5\;2\;1\;4\;3,
\]
so one recovers $\RWY(v)$ by applying $w_\circ^{(6)}$.
\end{example}

\section{Proof of Theorem~\ref{t.ess-diagram}}\label{s.proof}

Some temporary notation will be useful.  Let $\mathscr{B}$ be the set of all type B basic triples.  For any $w\in W_\infty$, let $\Max(w)$ be the set of type B basic triples $(k,p,q)$ such that $w(k,p,q)$ is maximal among all basic signed permutations below $w$ in Bruhat order.  Thus we must show $\Max(w)=\Ess(w)$.

Defining $\Max(v)$ analogously for $v\in S_\infty$, we have $\Max(\iota(w)) = \Ess(\iota(w))$ by \cite{kob} (or \cite[Proposition 4.6]{rwy}).  From the definitions, $\Ess(w)$ is contained in $\Ess(\iota(w)) \cap \mathscr{B}$.  To prove the theorem, we will show that $\Max(w)$ is also contained in $\Ess(\iota(w)) \cap \mathscr{B}$, and then show that the exceptions are the same as the ones specified in the definition of $\Ess(w)$ (Definition~\ref{d.essential}).  The key ingredient is a comparison of the Bruhat orders on $S_\infty$ and $W_\infty$, so we begin with two lemmas concerning this.  Throughout, we implicitly use the fact that $w\leq w'$ in $W_\infty$ iff $\iota(w)\leq \iota(w')$ in $S_\infty$---see, e.g., \cite[Corollary~8.1.9]{bb}.  (The ``if'' statement fails for the type D Weyl group, giving another hint that this case is more complicated.)

Suppose $(k,p,q)$ and $(k',p',q')$ are two basic triples of type B, hence also of type A.  It is easy to see that if $v(k',p',q') \geq v(k,p,q)$ in type $S_\infty$, then $w(k',p',q') \geq w(k,p,q)$ in $W_\infty$.  The converse is not true, but the exceptions are easily classified.

\begin{lemma}\label{l.implies}
Suppose $w(k',p',q') \geq w(k,p,q)$ in type $W_\infty$, but $v(k',p',q') \not\geq v(k,p,q)$ in $S_\infty$.  Then either
\begin{enumerate}
\item $-k'< q' < 0 < q$, and $v(k'+q',p',\bar{q'}+1) \geq v(k,p,q)$ in $S_\infty$; or

\smallskip

\item $q' > 0 > q$, and $v(k',p',q')^\perp \geq v(k,p,q)$ in $S_\infty$.
\end{enumerate}
\end{lemma}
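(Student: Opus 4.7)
The plan is to use the embedding $\iota\colon W_\infty \hookrightarrow S_\infty$ to reduce the hypothesis to a Bruhat comparison in $S_\infty$, and then to analyze which maximal bigrassmannian below $\iota(w(k',p',q'))$ dominates $v(k,p,q)$. Since $w\leq w'$ in $W_\infty$ iff $\iota(w)\leq \iota(w')$ in $S_\infty$, and $r_{\iota(w(k,p,q))}(p,q) = r_{w(k,p,q)}(p,q) = k$ by Lemma~\ref{l.unique}, the hypothesis translates to $v(k,p,q) \leq \iota(w(k',p',q'))$ in $S_\infty$. Appealing to \cite{kob} (or \cite[Proposition~4.6]{rwy}), there exists $(K,P,Q)\in\Ess(\iota(w(k',p',q')))$ with $v(k,p,q)\leq v(K,P,Q)$.

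The next step is to compute $\Ess(\iota(w(k',p',q')))$ explicitly. Direct inspection of the Rothe diagram of $\iota(w(k',p',q'))$, combined with the reflection symmetry of Lemma~\ref{l.symmetric-essential}, should yield
\[
  \Ess(\iota(w(k',p',q'))) = \begin{cases} \{(k',p',q'),\,(k',p',q')^\perp\} & \text{if } q'>0 \text{ or } q'\leq -k', \\ \{(k',p',q'),\,(k',p',q')^\perp,\,(k'+q',p',\bar{q'}+1),\,(k'+q',p',\bar{q'}+1)^\perp\} & \text{if } -k'<q'<0. \end{cases}
\]
The extra symmetric pair in the second regime reflects the isotropic phenomenon visible in small examples such as $\iota(w(2,3,\bar{1}))$. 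Assuming $v(k,p,q)\not\leq v(k',p',q')$, the triple $(K,P,Q)$ must be one of the three remaining essential elements.

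I would finish by case analysis using the standard Bruhat criterion for bigrassmannians in $S_\infty$, namely $v(k_1,p_1,q_1)\leq v(k_2,p_2,q_2)$ iff $k_1+\max(0,p_1-p_2)+\max(0,q_1-q_2)\leq k_2$. Since $(k,p,q)$ is basic of type~B, one has $p\geq 1$, which greatly simplifies the positive-part terms. The three candidates then yield: $(K,P,Q)=(k',p',q')^\perp$ is feasible only when $q'>0$, and then forces $q<0$, giving case~(ii); $(K,P,Q)=(k'+q',p',\bar{q'}+1)$ occurs only for $-k'<q'<0$ and forces $q>0$, giving case~(i); and $(K,P,Q)=(k'+q',p',\bar{q'}+1)^\perp$ (again only for $-k'<q'<0$) in fact implies $v(k,p,q)\leq v(k'+q',p',\bar{q'}+1)$, using $p\geq 1$, so it reduces to case~(i).

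The main obstacle is the explicit identification of $\Ess(\iota(w(k',p',q')))$ in the second step: in the regime $-k'<q'<0$ one must verify that precisely four SE corners appear in the diagram of $\iota(w(k',p',q'))$ and that they coincide with the two predicted symmetric pairs. This requires a careful sign-sensitive analysis of the matrix structure of the basic signed permutation, and a separate check that no essential positions appear outside the predicted set. The subsequent case analysis is then essentially bookkeeping with the Bruhat inequality.
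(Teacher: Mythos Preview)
Your approach differs from the paper's and is largely sound. The paper argues by a direct comparison of the rank functions $r_{w(k',p',q')}$ and $r_{v(k',p',q')}$: for each sign regime of $q'$ it locates the region of $(p,q)$-space where they disagree and checks that there $r_{w(k',p',q')}$ coincides with the rank function of the asserted dominating bigrassmannian. Your route---computing $\Ess(\iota(w(k',p',q')))$ and invoking the type~A characterization of essential sets---trades that rank computation for an essential-set computation; your claimed list (two or four SE corners, according to the sign regime) is correct, as one can verify for instance on $\iota(w(2,2,\bar1))=\iota(3\,\bar2\,1)$.

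There is, however, a gap in your final case analysis. You assert that the candidate $(K,P,Q)=(k',p',q')^\perp$ is ``feasible only when $q'>0$'', but in the regime $-k'<q'<0$ this candidate is present and can dominate a type~B triple. With $(k',p',q')=(5,2,\bar1)$, for example, one has $v(1,1,4)\leq v(5,\bar1,2)=(5,2,\bar1)^\perp$ and $v(1,1,4)\not\leq v(5,2,\bar1)$, while $w(1,1,4)\leq w(5,2,\bar1)$ in $W_\infty$. The correct statement is that when $-k'<q'<0$ and $p\geq1$, the inequalities $v(k,p,q)\leq(k',p',q')^\perp$ and $v(k,p,q)\not\leq v(k',p',q')$ force $q>0$ and then imply $v(k,p,q)\leq v(k'+q',p',\bar{q'}+1)$, reducing to case~(i) by the same mechanism you use for the fourth candidate. (When $q'\leq -k'$ your infeasibility claim is right: your Bruhat criterion yields $k+p\leq k'+q'\leq 0$.) The repair is routine, but as written the three-candidate analysis is incomplete.
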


\begin{proof}
We have $v(k',p',q') \geq v(k,p,q)$ in $S_\infty$ (respectively, $w(k',p',q') \geq w(k,p,q)$ in $W_\infty$) exactly when $r_{v(k',p',q')}(p,q) \geq k$ (resp., $r_{w(k',p',q')}(p,q) \geq k$).  To prove the lemma, therefore, we need to compare the rank functions of $v(k',p',q')$ and $w(k',p',q')$.  The hypothesis is that
\begin{equation}\label{e.except}
  r_{w(k',p',q')}(p,q) \geq k > r_{v(k',p',q')}(p,q).
\end{equation}

When $q'\leq -k'$, the negative columns of the permutation matrices for $w(k',p',q')$ and $v(k',p',q')$ are the same, and so the corresponding rank functions agree; the inequality \eqref{e.except} cannot hold in this case.

Consider $-k'<q'<0$.  In this case the rank functions for $w=w(k',p',q')$ and $v=v(k',p',q')$ agree outside the region where $0 < p,q \leq k'+q'+1$ and $p+q \leq p'+k'$.  Inside this region, $r_w(p,q)>r_v(p,q)$, so one can choose $k$ so that \eqref{e.except} holds.  However, in this region $r_w(p,q) = r_{v(k'+q',p',\bar{q'}+1)}(p,q)$, so the lemma is true in this case.  (See Figure~\ref{f.case2} for an illustration.)

Finally, assume $q'>0$.  As in the previous case, the rank functions for $w=w(k',p',q')$ and $v=v(k',p',q')$ agree outside the region where $0<p<q'$ and $-k'-q'+1<q\leq-p$.  Inside this region, $r_w(p,q)>r_v(p,q)$, but $r_w(p,q) = r_{v(k',p',q')^\perp}(p,q)$, so the lemma is true in this case as well.  (See Figure~\ref{f.case3}.)
\end{proof}


\begin{figure}[ht]
\pspicture(70,70)(-100,-100)

\psset{unit=1.35pt}

\pspolygon[fillstyle=solid,fillcolor=lightgray,linecolor=lightgray](-45,5)(-5,5)(-5,-35)(-25,-35)(-25,-25)(-35,-25)(-35,-15)(-45,-15)(-45,5)

\psline{-}(-5,55)(-55,55)
\psline{-}(-5,45)(-55,45)
\psline{-}(-5,35)(-55,35)
\psline{-}(-5,25)(-55,25)
\psline{-}(-5,15)(-55,15)
\psline{-}(-5,5)(-55,5)
\psline{-}(-5,-5)(-55,-5)
\psline{-}(-5,-15)(-55,-15)
\psline{-}(-5,-25)(-55,-25)
\psline{-}(-5,-35)(-55,-35)
\psline{-}(-5,-45)(-55,-45)
\psline{-}(-5,-55)(-55,-55)

\psline{-}(-5,55)(-5,-55)
\psline{-}(-15,55)(-15,-55)
\psline{-}(-25,55)(-25,-55)
\psline{-}(-35,55)(-35,-55)
\psline{-}(-45,55)(-45,-55)
\psline{-}(-55,55)(-55,-55)

\rput(-65,50){$\bar{5}$}
\rput(-65,40){$\bar{4}$}
\rput(-65,30){$\bar{3}$}
\rput(-65,20){$\bar{2}$}
\rput(-65,10){$\bar{1}$}
\rput(-65,0){$0$}
\rput(-65,-10){$1$}
\rput(-65,-20){$2$}
\rput(-65,-30){$3$}
\rput(-65,-40){$4$}
\rput(-65,-50){$5$}

\pscircle*(-50,10){2}
\pscircle*(-40,-20){2}
\pscircle*(-30,-30){2}
\pscircle*(-20,-40){2}
\pscircle*(-10,50){2}

\psline[linewidth=1.5pt]{-}(-25,15)(-25,25)(-15,25)(-15,15)(-25,15)

\rput(-20,20){\footnotesize{$4$}}

\rput(-40,0){\footnotesize{$1$}}
\rput(-40,-10){\footnotesize{$1$}}

\rput(-30,0){\footnotesize{$2$}}
\rput(-30,-10){\footnotesize{$2$}}
\rput(-30,-20){\footnotesize{$1$}}

\rput(-20,0){\footnotesize{$3$}}
\rput(-20,-10){\footnotesize{$3$}}
\rput(-20,-20){\footnotesize{$2$}}
\rput(-20,-30){\footnotesize{$1$}}

\rput(-10,0){\footnotesize{$3$}}
\rput(-10,-10){\footnotesize{$3$}}
\rput(-10,-20){\footnotesize{$2$}}
\rput(-10,-30){\footnotesize{$1$}}

\rput[l](-45,-65){$w(4,2,\bar{1})$}

\endpspicture
\pspicture(70,70)(-30,-100)

\psset{unit=1.35pt}

\pspolygon[fillstyle=solid,fillcolor=lightgray,linecolor=lightgray](-45,5)(-5,5)(-5,-35)(-25,-35)(-25,-25)(-35,-25)(-35,-15)(-45,-15)(-45,5)

\psline{-}(-5,55)(-55,55)
\psline{-}(-5,45)(-55,45)
\psline{-}(-5,35)(-55,35)
\psline{-}(-5,25)(-55,25)
\psline{-}(-5,15)(-55,15)
\psline{-}(-5,5)(-55,5)
\psline{-}(-5,-5)(-55,-5)
\psline{-}(-5,-15)(-55,-15)
\psline{-}(-5,-25)(-55,-25)
\psline{-}(-5,-35)(-55,-35)
\psline{-}(-5,-45)(-55,-45)
\psline{-}(-5,-55)(-55,-55)

\psline{-}(-5,55)(-5,-55)
\psline{-}(-15,55)(-15,-55)
\psline{-}(-25,55)(-25,-55)
\psline{-}(-35,55)(-35,-55)
\psline{-}(-45,55)(-45,-55)
\psline{-}(-55,55)(-55,-55)

\pscircle*(-50,10){2}
\pscircle*(-40,0){2}
\pscircle*(-30,-10){2}
\pscircle*(-20,-20){2}
\pscircle*(-10,50){2}

\psline[linewidth=1.5pt]{-}(-25,15)(-25,25)(-15,25)(-15,15)(-25,15)

\rput(-20,20){\footnotesize{$4$}}

\rput(-40,-10){\footnotesize{$0$}}

\rput(-30,0){\footnotesize{$1$}}
\rput(-30,-20){\footnotesize{$0$}}

\rput(-20,0){\footnotesize{$2$}}
\rput(-20,-10){\footnotesize{$1$}}
\rput(-20,-30){\footnotesize{$0$}}

\rput(-10,0){\footnotesize{$2$}}
\rput(-10,-10){\footnotesize{$1$}}
\rput(-10,-20){\footnotesize{$0$}}
\rput(-10,-30){\footnotesize{$0$}}

\rput[l](-45,-65){$v(4,2,\bar{1})$}

\endpspicture
\pspicture(70,70)(-30,-100)

\psset{unit=1.35pt}

\pspolygon[fillstyle=solid,fillcolor=lightgray,linecolor=lightgray](-45,5)(-5,5)(-5,-35)(-25,-35)(-25,-25)(-35,-25)(-35,-15)(-45,-15)(-45,5)

\psline{-}(-5,55)(-55,55)
\psline{-}(-5,45)(-55,45)
\psline{-}(-5,35)(-55,35)
\psline{-}(-5,25)(-55,25)
\psline{-}(-5,15)(-55,15)
\psline{-}(-5,5)(-55,5)
\psline{-}(-5,-5)(-55,-5)
\psline{-}(-5,-15)(-55,-15)
\psline{-}(-5,-25)(-55,-25)
\psline{-}(-5,-35)(-55,-35)
\psline{-}(-5,-45)(-55,-45)
\psline{-}(-5,-55)(-55,-55)

\psline{-}(-5,55)(-5,-55)
\psline{-}(-15,55)(-15,-55)
\psline{-}(-25,55)(-25,-55)
\psline{-}(-35,55)(-35,-55)
\psline{-}(-45,55)(-45,-55)
\psline{-}(-55,55)(-55,-55)

\pscircle*(-50,50){2}
\pscircle*(-40,-20){2}
\pscircle*(-30,-30){2}
\pscircle*(-20,-40){2}
\pscircle*(-10,40){2}

\psline[linewidth=1.5pt]{-}(-25,-15)(-25,-5)(-15,-5)(-15,-15)(-25,-15)

\rput(-20,-10){\footnotesize{$3$}}

\rput(-40,0){\footnotesize{$1$}}
\rput(-40,-10){\footnotesize{$1$}}

\rput(-30,0){\footnotesize{$2$}}
\rput(-30,-10){\footnotesize{$2$}}
\rput(-30,-20){\footnotesize{$1$}}

\rput(-20,0){\footnotesize{$3$}}
\rput(-20,-20){\footnotesize{$2$}}
\rput(-20,-30){\footnotesize{$1$}}

\rput(-10,0){\footnotesize{$3$}}
\rput(-10,-10){\footnotesize{$3$}}
\rput(-10,-20){\footnotesize{$2$}}
\rput(-10,-30){\footnotesize{$1$}}

\rput[l](-45,-65){$v(3,2,2)$}

\endpspicture

\caption{\label{f.case2}}
\end{figure}


\begin{figure}[h]
\pspicture(70,90)(-100,-100)

\psset{unit=1.35pt}

\pspolygon[fillstyle=solid,fillcolor=lightgray,linecolor=lightgray](-25,55)(-5,55)(-5,15)(-15,15)(-15,25)(-25,25)(-25,55)

\psline{-}(-5,65)(-65,65)
\psline{-}(-5,55)(-65,55)
\psline{-}(-5,45)(-65,45)
\psline{-}(-5,35)(-65,35)
\psline{-}(-5,25)(-65,25)
\psline{-}(-5,15)(-65,15)
\psline{-}(-5,5)(-65,5)
\psline{-}(-5,-5)(-65,-5)
\psline{-}(-5,-15)(-65,-15)
\psline{-}(-5,-25)(-65,-25)
\psline{-}(-5,-35)(-65,-35)
\psline{-}(-5,-45)(-65,-45)
\psline{-}(-5,-55)(-65,-55)
\psline{-}(-5,-65)(-65,-65)

\psline{-}(-5,65)(-5,-65)
\psline{-}(-15,65)(-15,-65)
\psline{-}(-25,65)(-25,-65)
\psline{-}(-35,65)(-35,-65)
\psline{-}(-45,65)(-45,-65)
\psline{-}(-55,65)(-55,-65)
\psline{-}(-65,65)(-65,-65)

\rput(-75,60){$\bar{6}$}
\rput(-75,50){$\bar{5}$}
\rput(-75,40){$\bar{4}$}
\rput(-75,30){$\bar{3}$}
\rput(-75,20){$\bar{2}$}
\rput(-75,10){$\bar{1}$}
\rput(-75,0){$0$}
\rput(-75,-10){$1$}
\rput(-75,-20){$2$}
\rput(-75,-30){$3$}
\rput(-75,-40){$4$}
\rput(-75,-50){$5$}
\rput(-75,-60){$6$}

\pscircle*(-60,-30){2}
\pscircle*(-50,-40){2}
\pscircle*(-40,-50){2}
\pscircle*(-30,60){2}
\pscircle*(-20,20){2}
\pscircle*(-10,10){2}

\psline[linewidth=1.5pt]{-}(-45,-25)(-45,-15)(-35,-15)(-35,-25)(-45,-25)

\rput(-40,-20){\footnotesize{$3$}}

\rput(-20,50){\footnotesize{$4$}}
\rput(-20,40){\footnotesize{$4$}}
\rput(-20,30){\footnotesize{$4$}}

\rput(-10,50){\footnotesize{$5$}}
\rput(-10,40){\footnotesize{$5$}}
\rput(-10,30){\footnotesize{$5$}}
\rput(-10,20){\footnotesize{$4$}}

\rput[l](-45,-75){$w(3,4,3)$}

\endpspicture
\pspicture(70,90)(-100,-100)

\psset{unit=1.35pt}

\pspolygon[fillstyle=solid,fillcolor=lightgray,linecolor=lightgray](-25,55)(-5,55)(-5,15)(-15,15)(-15,25)(-25,25)(-25,55)

\psline{-}(-5,65)(-65,65)
\psline{-}(-5,55)(-65,55)
\psline{-}(-5,45)(-65,45)
\psline{-}(-5,35)(-65,35)
\psline{-}(-5,25)(-65,25)
\psline{-}(-5,15)(-65,15)
\psline{-}(-5,5)(-65,5)
\psline{-}(-5,-5)(-65,-5)
\psline{-}(-5,-15)(-65,-15)
\psline{-}(-5,-25)(-65,-25)
\psline{-}(-5,-35)(-65,-35)
\psline{-}(-5,-45)(-65,-45)
\psline{-}(-5,-55)(-65,-55)
\psline{-}(-5,-65)(-65,-65)

\psline{-}(-5,65)(-5,-65)
\psline{-}(-15,65)(-15,-65)
\psline{-}(-25,65)(-25,-65)
\psline{-}(-35,65)(-35,-65)
\psline{-}(-45,65)(-45,-65)
\psline{-}(-55,65)(-55,-65)
\psline{-}(-65,65)(-65,-65)

\pscircle*(-60,-30){2}
\pscircle*(-50,-40){2}
\pscircle*(-40,-50){2}
\pscircle*(-30,60){2}
\pscircle*(-20,50){2}
\pscircle*(-10,40){2}

\psline[linewidth=1.5pt]{-}(-45,-25)(-45,-15)(-35,-15)(-35,-25)(-45,-25)

\rput(-40,-20){\footnotesize{$3$}}

\rput(-20,40){\footnotesize{$3$}}
\rput(-20,30){\footnotesize{$3$}}

\rput(-10,50){\footnotesize{$4$}}
\rput(-10,30){\footnotesize{$3$}}
\rput(-10,20){\footnotesize{$3$}}

\rput[l](-45,-75){$v(3,4,3)$}

\endpspicture

\pspicture(100,100)(-50,-100)

\psset{unit=1.35pt}

\pspolygon[fillstyle=solid,fillcolor=lightgray,linecolor=lightgray](-25,55)(-5,55)(-5,15)(-15,15)(-15,25)(-25,25)(-25,55)

\psline{-}(65,65)(-65,65)
\psline{-}(65,55)(-65,55)
\psline{-}(65,45)(-65,45)
\psline{-}(65,35)(-65,35)
\psline{-}(65,25)(-65,25)
\psline{-}(65,15)(-65,15)
\psline{-}(65,5)(-65,5)
\psline{-}(65,-5)(-65,-5)
\psline{-}(65,-15)(-65,-15)
\psline{-}(65,-25)(-65,-25)
\psline{-}(65,-35)(-65,-35)
\psline{-}(65,-45)(-65,-45)
\psline{-}(65,-55)(-65,-55)
\psline{-}(65,-65)(-65,-65)

\psline{-}(65,65)(65,-65)
\psline{-}(55,65)(55,-65)
\psline{-}(45,65)(45,-65)
\psline{-}(35,65)(35,-65)
\psline{-}(25,65)(25,-65)
\psline{-}(15,65)(15,-65)
\psline{-}(5,65)(5,-65)
\psline{-}(-5,65)(-5,-65)
\psline{-}(-15,65)(-15,-65)
\psline{-}(-25,65)(-25,-65)
\psline{-}(-35,65)(-35,-65)
\psline{-}(-45,65)(-45,-65)
\psline{-}(-55,65)(-55,-65)
\psline{-}(-65,65)(-65,-65)

\rput(-75,60){$\bar{6}$}
\rput(-75,50){$\bar{5}$}
\rput(-75,40){$\bar{4}$}
\rput(-75,30){$\bar{3}$}
\rput(-75,20){$\bar{2}$}
\rput(-75,10){$\bar{1}$}
\rput(-75,0){$0$}
\rput(-75,-10){$1$}
\rput(-75,-20){$2$}
\rput(-75,-30){$3$}
\rput(-75,-40){$4$}
\rput(-75,-50){$5$}
\rput(-75,-60){$6$}

\pscircle*(-60,60){2}
\pscircle*(-50,20){2}
\pscircle*(-40,10){2}
\pscircle*(-30,0){2}
\pscircle*(-20,-10){2}
\pscircle*(-10,-20){2}
\pscircle*(0,-30){2}
\pscircle*(10,-40){2}
\pscircle*(20,-50){2}
\pscircle*(30,-60){2}
\pscircle*(40,50){2}
\pscircle*(50,40){2}
\pscircle*(60,30){2}

\psline[linewidth=1.5pt]{-}(35,35)(35,25)(25,25)(25,35)(35,35)

\rput(30,30){\footnotesize{$9$}}

\rput(-20,50){\footnotesize{$4$}}
\rput(-20,40){\footnotesize{$4$}}
\rput(-20,30){\footnotesize{$4$}}

\rput(-10,50){\footnotesize{$5$}}
\rput(-10,40){\footnotesize{$5$}}
\rput(-10,30){\footnotesize{$5$}}
\rput(-10,20){\footnotesize{$4$}}

\rput[l](-15,-75){$v(9,\bar{3},\bar{2})$}

\endpspicture

\caption{\label{f.case3}}
\end{figure}


The above lemma and its proof show that $w(k,p,q) \leq w(k',p',q')$ in $W_\infty$ if and only if
\begin{enumerate}
\item $v(k,p,q) \leq v(k'+q',p',\bar{q'}+1)$, 
\item $v(k,p,q) \leq v(k',p',q')^\perp$, or
\item $v(k,p,q) \leq v(k',p',q')$
\end{enumerate}
in $S_\infty$.  (This clarifies the claim before \cite[Lemme~7.3]{ls2}, where the first case is missing.)

\begin{lemma}\label{l.essential-redundant}
Given a signed permutation $w$, suppose $(k,p,q)$ and $(k',p',q')$ are basic triples of type B, and are in the (type A) essential set $\Ess(\iota(w))$.  Assume that $w(k',p',q')\geq w(k,p,q)$ in $W_n$. 
Then
\begin{enumerate}
\item $p=p'$;
\item $q' < 0$ and $q = \bar{q'}+1$; and 
\item $k=k'+q'$.
\end{enumerate}
\end{lemma}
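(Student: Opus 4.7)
The plan is to reduce the Bruhat comparison in $W_\infty$ to comparisons in $S_\infty$, where I can exploit the fact that elements of $\Ess(\iota(w))$ are pairwise incomparable.

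First I would invoke the theorem of Kobayashi cited just before Theorem~\ref{t.ess-diagram}: elements of $\Ess(\iota(w))$ correspond to basic permutations maximal among all basic permutations below $\iota(w)$ in $S_\infty$. In particular, $v(k,p,q)$ and $v(k',p',q')$ are both maximal basic below $\iota(w)$, and (assuming the triples are distinct, else the claim is vacuous) they are incomparable in $S_\infty$; so $v(k',p',q')\not\geq v(k,p,q)$. Combined with the hypothesis $w(k',p',q')\geq w(k,p,q)$, Lemma~\ref{l.implies} then forces one of its two cases: either Case~(i), with $-k'<q'<0<q$ and $v(k,p,q)\leq v(k'+q',p',\bar{q'}+1)$, or Case~(ii), with $q'>0>q$ and $v(k,p,q)\leq v(k',p',q')^\perp$.

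Case~(ii) I would dispose of quickly. By Lemma~\ref{l.symmetric-essential}, $v(k',p',q')^\perp\in\Ess(\iota(w))$, so $v(k',p',q')^\perp\leq\iota(w)$, and maximality of $v(k,p,q)$ forces $v(k,p,q)=v(k',p',q')^\perp = v(k'+p'+q'-1,\,\bar{p'}+1,\,\bar{q'}+1)$. But then $p=\bar{p'}+1\leq 0$ since $p'\geq 1$, contradicting the type B basic requirement $p\geq 1$.

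The main obstacle is Case~(i): I must upgrade $v(k,p,q)\leq v(k'+q',p',\bar{q'}+1)$ to $v(k'+q',p',\bar{q'}+1)\leq\iota(w)$, so that maximality of $v(k,p,q)$ again yields equality. For this I would use the isotropy inequality highlighted in the introduction: since $F_Q^\perp/F_Q$ carries a nondegenerate form of dimension $2Q-1$ for $Q>0$, the image of $(E_p\cap F_Q^\perp)/(E_p\cap F_Q)$ there is isotropic of dimension at most $Q-1$, giving $r_w(p,Q)\geq r_w(p,\bar{Q}+1)-Q+1$. Applying this with $Q=\bar{q'}+1>0$ (so $\bar{Q}+1=q'$), and noting that the signed rank function $r_w$ coincides with $r_{\iota(w)}$ for $p>0$, I obtain $r_{\iota(w)}(p',\bar{q'}+1)\geq r_{\iota(w)}(p',q')+q'\geq k'+q'$. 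This is exactly the condition $v(k'+q',p',\bar{q'}+1)\leq\iota(w)$. Maximality of $v(k,p,q)$ then upgrades the inequality from Case~(i) to the equality $(k,p,q)=(k'+q',p',\bar{q'}+1)$, delivering the three conclusions with $q'<0$ read off directly from Case~(i).
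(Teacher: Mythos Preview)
Your proof is correct and follows essentially the same approach as the paper: invoke Lemma~\ref{l.implies}, then use maximality of $v(k,p,q)$ among basics below $\iota(w)$ to upgrade each asserted inequality to an equality, ruling out Case~(ii) because the resulting triple fails the type~B constraint $p>0$. Your explicit use of the isotropy inequality in Case~(i) to verify $v(k'+q',p',\bar{q'}+1)\leq\iota(w)$ cleanly fills in a step the paper's proof leaves implicit.
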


\begin{proof}
By assumption, $w(k',p',q') \geq w(k,p,q)$ in $W_\infty$, but $v(k',p',q') \not\geq v(k,p,q)$ in $S_\infty$, since elements of an essential set are incomparable.  Therefore Lemma~\ref{l.implies} applies.  In each case of that lemma, a (type A) basic triple is asserted to be greater than $(k,p,q)$; again because distinct elements of the essential set are incomparable, the asserted inequality must actually be an equality.  In the first case, this yields the above conclusion.  The second case of Lemma~\ref{l.implies} cannot hold, because $(k',p',q')^\perp = (p'+q'+k'-1,\bar{p'}+1,\bar{q'}+1)$ is not a type B basic triple.
\end{proof}

Finally, we turn to the proof of the theorem.

\begin{proof}[Proof of Theorem~\ref{t.ess-diagram}]
First we show that $\Max(w)$ is contained in the set of type B basic triples inside $\Ess(\iota(w))$.  Every basic triple in $\Ess(\iota(w))$ is either a type B basic triple or a reflection of one, in the sense of Lemma~\ref{l.symmetric-essential}; basic triples with $p>0$ and $q=0$ cannot occur because $\iota(w)(0)=0$, and triples with $p\leq 0$ and $q=1$ cannot occur for the same reason.  Now suppose $(k,p,q)$ is in $\Max(w)$, but not in $\Ess(\iota(w))$.  This means there is a (type A) basic triple $(k',p',q')$ such that $v(k,p,q) < v(k',p',q') \leq \iota(w)$.  Such a basic triple cannot be of type B, since otherwise it would violate maximality of $w(k,p,q)$, so $(k',p',q')^\perp$ must be of type $B$.  Note also that $(k',p',q')^\perp \neq (k,p,q)$, since the permutations $v(k,p,q)$ and $v(k,p,q)^\perp$ are never comparable.  Finally, since the Schubert conditions $C(k',p',q')$ and $C(k',p',q')^\perp$ are equivalent on the type B flag variety, and the first one implies $C(k,p,q)$, so the second one does as well; therefore we must have $w(k',p',q')^\perp > w(k,p,q)$, contradicting maximality again.  

Given that $\Max(w)\subseteq \Ess(\iota(w)) \cap \mathscr{B}$, the proposition is proved by determining when two type B basic triples in $\Ess(\iota(w))$ yield comparable basic signed permutations, and hence redundant conditions in type B.  This is done by Lemma~\ref{l.essential-redundant}.
\end{proof}

\section{A variation for type C}\label{s.typeC}

The group $W_n$ of signed permutations is isomorphic to the (type C) Weyl group of $Sp_{2n}$, as well as to the (type B) Weyl group of $SO_{2n+1}$.  The main results above are intrinsic to $W_n$, and so do not depend on whether one regards it as type B or type C, but it is sometimes useful to have diagrams adapted to the type C context.  Here we briefly sketch the modifications; all the results and proofs can easily be reformulated.

We will use the embdedding of Weyl groups $\iota'\colon W_n\hookrightarrow S_{2n}$, which omits the value $w(0)=0$.  For a signed permutation $w\in W_n$, one forms a matrix by placing dots in a $2n\times n$ array of boxes as before: labelling the rows $\{\bar{n},\ldots,\bar{1},1,\ldots,n\}$ and the columns $\{\bar{n},\ldots,\bar{1}\}$, dots are placed in the boxes $(w(i),i)$ for $\bar{n}\leq i\leq \bar{1}$ (or equivalently, $(\bar{w(i)},\bar\imath)$ for $1\leq i\leq n$).  An ``$\times$'' is placed in boxes $(a,b)$ such that $a=\bar{w(i)}$ for some $i<b$; that is, for each dot, place an $\times$ in the columns strictly to the right, and rows opposite the dot.  This should be compared with the parametrization of Schubert cells in the type C flag variety $Sp_{2n}/B$ given in \cite[\S6.1]{fp}.

The \define{type C extended diagram} is the collection of boxes $D_C^+(w)$ in the $2n\times n$ rectangle which remain after striking out those to the right or below a dot; the \define{type C diagram} is the subset $D_C(w)\subseteq D_C^+(w)$ of boxes not marked by an $\times$.  As before, the number of boxes in $D_C(w)$ is equal to $\ell(w)$---the boxes $(i,j)$ of $D_C(w)$ correspond to inversions $\ee_i-\ee_{w(j)}$ of $w$, using $2\ee_1,\ee_2-\ee_1,\ldots,\ee_n-\ee_{n-1}$ as the simple roots for type $\mathrm{C}_n$.  

The essential set is determined exactly as in Definition~\ref{d.essential}, except that when $q=1$, we understand $q-1$ to mean $\bar{1}$.  (Thus if $D^+_C(w)$ has a SE corner in position $(\bar{1},\bar{2})$, it determines an essential position $(p,q)=(2,1)$.)  Examples are shown in Figure~\ref{f.essentialC}.

\begin{figure}[ht]
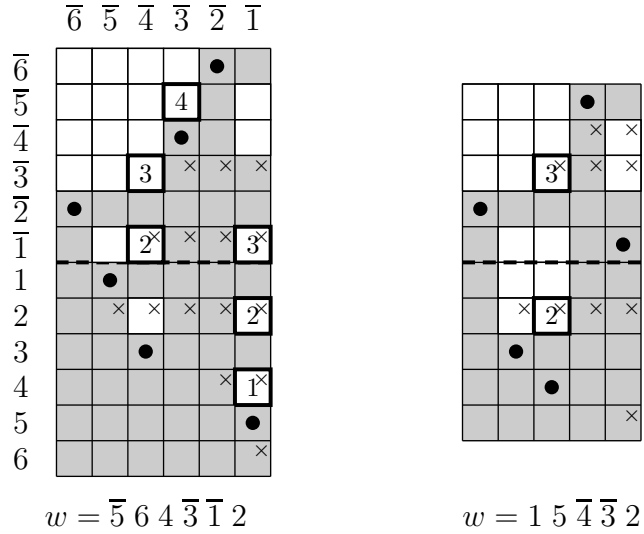

\pspicture(100,80)(-100,-90)

\psset{unit=1.35pt}

\pspolygon[fillstyle=solid,fillcolor=lightgray,linecolor=lightgray](-65,65)(-5,65)(-5,-55)(-65,-55)(-65,55)

\psline{-}(-5,65)(-65,65)
\psline{-}(-5,55)(-65,55)
\psline{-}(-5,45)(-65,45)
\psline{-}(-5,35)(-65,35)
\psline{-}(-5,25)(-65,25)
\psline{-}(-5,15)(-65,15)
\psline{-}(-5,5)(-65,5)
\psline{-}(-5,-5)(-65,-5)
\psline{-}(-5,-15)(-65,-15)
\psline{-}(-5,-25)(-65,-25)
\psline{-}(-5,-35)(-65,-35)
\psline{-}(-5,-45)(-65,-45)
\psline{-}(-5,-55)(-65,-55)

\psline{-}(-5,65)(-5,-55)
\psline{-}(-15,65)(-15,-55)
\psline{-}(-25,65)(-25,-55)
\psline{-}(-35,65)(-35,-55)
\psline{-}(-45,65)(-45,-55)
\psline{-}(-55,65)(-55,-55)
\psline{-}(-65,65)(-65,-55)

\rput(-75,60){$\bar{6}$}
\rput(-75,50){$\bar{5}$}
\rput(-75,40){$\bar{4}$}
\rput(-75,30){$\bar{3}$}
\rput(-75,20){$\bar{2}$}
\rput(-75,10){$\bar{1}$}
\rput(-75,0){$1$}
\rput(-75,-10){$2$}
\rput(-75,-20){$3$}
\rput(-75,-30){$4$}
\rput(-75,-40){$5$}
\rput(-75,-50){$6$}

\rput[b](-60,70){$\bar{6}$}
\rput[b](-50,70){$\bar{5}$}
\rput[b](-40,70){$\bar{4}$}
\rput[b](-30,70){$\bar{3}$}
\rput[b](-20,70){$\bar{2}$}
\rput[b](-10,70){$\bar{1}$}

\psline[linestyle=dashed,linewidth=1.5pt]{-}(-5,5)(-65,5)

\pscircle*(-10,-40){2}
\pscircle*(-20,60){2}
\pscircle*(-30,40){2}
\pscircle*(-40,-20){2}
\pscircle*(-50,0){2}
\pscircle*(-60,20){2}

\whitebox(-65,55)
\whitebox(-65,45)
\whitebox(-65,35)
\whitebox(-65,25)

\whitebox(-55,55)
\whitebox(-55,45)
\whitebox(-55,35)
\whitebox(-55,25)
\whitebox(-55,5)

\whitebox(-45,55)
\whitebox(-45,45)
\whitebox(-45,35)
\whitebox(-45,25)
\whitebox(-45,5)
\whitebox(-45,-15)

\whitebox(-35,55)
\whitebox(-35,45)

\whitebox(-15,45)
\whitebox(-15,35)
\whitebox(-15,5)
\whitebox(-15,-15)
\whitebox(-15,-35)

\rput[bl](-50,-10){\tiny{$\times$}}
\rput[bl](-40,-10){\tiny{$\times$}}
\rput[bl](-30,-10){\tiny{$\times$}}
\rput[bl](-20,-10){\tiny{$\times$}}
\rput[bl](-10,-10){\tiny{$\times$}}

\rput[bl](-40,10){\tiny{$\times$}}
\rput[bl](-30,10){\tiny{$\times$}}
\rput[bl](-20,10){\tiny{$\times$}}
\rput[bl](-10,10){\tiny{$\times$}}

\rput[bl](-30,30){\tiny{$\times$}}
\rput[bl](-20,30){\tiny{$\times$}}
\rput[bl](-10,30){\tiny{$\times$}}

\rput[bl](-20,-30){\tiny{$\times$}}
\rput[bl](-10,-30){\tiny{$\times$}}

\rput[bl](-10,-50){\tiny{$\times$}}

\psline[linewidth=1.5pt]{-}(-45,5)(-45,15)(-35,15)(-35,5)(-45,5)
\rput(-40,10){\footnotesize{$2$}}

\psline[linewidth=1.5pt]{-}(-45,25)(-45,35)(-35,35)(-35,25)(-45,25)
\rput(-40,30){\footnotesize{$3$}}

\psline[linewidth=1.5pt]{-}(-35,45)(-35,55)(-25,55)(-25,45)(-35,45)
\rput(-30,50){\footnotesize{$4$}}

\psline[linewidth=1.5pt]{-}(-15,-35)(-15,-25)(-5,-25)(-5,-35)(-15,-35)
\rput(-10,-30){\footnotesize{$1$}}

\psline[linewidth=1.5pt]{-}(-15,-15)(-15,-5)(-5,-5)(-5,-15)(-15,-15)
\rput(-10,-10){\footnotesize{$2$}}

\psline[linewidth=1.5pt]{-}(-15,5)(-15,15)(-5,15)(-5,5)(-15,5)
\rput(-10,10){\footnotesize{$3$}}

\rput(-40,-65){$w=\bar{5}\;6\;4\;\bar{3}\;\bar{1}\;2$}

\endpspicture
\pspicture(70,110)(-40,-90)

\psset{unit=1.35pt}

\pspolygon[fillstyle=solid,fillcolor=lightgray,linecolor=lightgray](-55,55)(-5,55)(-5,-45)(-55,-45)(-55,55)

\psline{-}(-5,55)(-55,55)
\psline{-}(-5,45)(-55,45)
\psline{-}(-5,35)(-55,35)
\psline{-}(-5,25)(-55,25)
\psline{-}(-5,15)(-55,15)
\psline{-}(-5,5)(-55,5)
\psline{-}(-5,-5)(-55,-5)
\psline{-}(-5,-15)(-55,-15)
\psline{-}(-5,-25)(-55,-25)
\psline{-}(-5,-35)(-55,-35)
\psline{-}(-5,-45)(-55,-45)

\psline{-}(-5,55)(-5,-45)
\psline{-}(-15,55)(-15,-45)
\psline{-}(-25,55)(-25,-45)
\psline{-}(-35,55)(-35,-45)
\psline{-}(-45,55)(-45,-45)
\psline{-}(-55,55)(-55,-45)

\pscircle*(-50,20){2}
\pscircle*(-40,-20){2}
\pscircle*(-30,-30){2}
\pscircle*(-20,50){2}
\pscircle*(-10,10){2}

\whitebox(-55,45)
\whitebox(-55,35)
\whitebox(-55,25)

\whitebox(-45,45)
\whitebox(-45,35)
\whitebox(-45,25)
\whitebox(-45,5)
\whitebox(-45,-5)
\whitebox(-45,-15)

\whitebox(-35,45)
\whitebox(-35,35)
\whitebox(-35,25)
\whitebox(-35,5)
\whitebox(-35,-5)
\whitebox(-35,-15)

\whitebox(-15,35)
\whitebox(-15,25)

\psline[linestyle=dashed,linewidth=1.5pt]{-}(-5,5)(-55,5)

\psline[linewidth=1.5pt]{-}(-35,-15)(-35,-5)(-25,-5)(-25,-15)(-35,-15)

\psline[linewidth=1.5pt]{-}(-35,25)(-35,35)(-25,35)(-25,25)(-35,25)

\rput[bl](-40,-10){\tiny{$\times$}}
\rput[bl](-30,-10){\tiny{$\times$}}
\rput[bl](-20,-10){\tiny{$\times$}}
\rput[bl](-10,-10){\tiny{$\times$}}

\rput[bl](-30,30){\tiny{$\times$}}
\rput[bl](-20,30){\tiny{$\times$}}
\rput[bl](-10,30){\tiny{$\times$}}

\rput[bl](-20,40){\tiny{$\times$}}
\rput[bl](-10,40){\tiny{$\times$}}

\rput[bl](-10,-40){\tiny{$\times$}}

\rput(-30,30){\footnotesize{$3$}}

\rput(-30,-10){\footnotesize{$2$}}

\rput(-30,-65){$w=1\;5\;\bar{4}\;\bar{3}\;2$}

\endpspicture
\caption{Type C diagrams and essential sets.\label{f.essentialC}}
\end{figure}

Just as for type B, the essential set gives a short list of rank conditions defining a symplectic degeneracy locus (or Schubert variety in $Sp_{2n}/B$).  To set up notation, let $e_{\bar{n}},\ldots,e_{\bar{1}},e_1,\ldots,e_n$ be a basis for a vector space $V$, and define a symplectic form by setting $\langle e_i,e_j\rangle = 0$ and $\langle e_{\bar{\imath}}, e_j \rangle =\delta_{i,j}$, for $i,j>0$.  The standard isotropic flag $F_\bullet$ is defined by taking $F_q$ to be the span of $e_i$, for $i\geq q$.  As before, the Schubert variety $\Omega_w$ is defined as
\[
  \Omega_w = \{ E_\bullet \,|\, \dim(E_p\cap F_q) \geq r_w(p,q) \text{ for all } 1\leq p\leq n, \; \bar{n}\leq q\leq n\}.
\]
The analogue of Corollary~\ref{c.essential-b} is this:
\begin{theorem}
The type C Schubert variety $\Omega_w\subseteq Sp_{2n}/B$ is defined by the minimal list of rank conditions $\dim(E_p\cap F_q)\geq k$, as $(k,p,q)$ ranges over $\Ess(w)$.
\end{theorem}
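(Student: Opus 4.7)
The plan is to follow the type B template essentially verbatim, since the combinatorics of $\Ess(w)$ and the Bruhat-theoretic result Theorem~\ref{t.essential-b} are intrinsic to $W_n$ and do not depend on the geometric realization. The only thing that needs to be re-verified is the type C analogue of Lemma~\ref{l.unique}: for each basic triple $(k,p,q)$ of type B (= type C), the single rank condition $\dim(E_p\cap F_q)\geq k$ cuts out precisely the symplectic Schubert variety $\Omega_{w(k,p,q)}\subseteq Sp_{2n}/B$. Once this is in place, Theorem~\ref{t.essential-b} gives $w=\sup(\Ess(w))$, and combining with the equivalence between Bruhat order and containment of Schubert varieties delivers both the defining property and the minimality of the list.

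Concretely, I would first translate each rank condition $C(k,p,q)$ into a closed subvariety $Z_{k,p,q}\subseteq Sp_{2n}/B$ and show it is irreducible of codimension $\ell(w(k,p,q))$, by the same resolution-of-singularities argument used in the proof of Lemma~\ref{l.unique}. Namely, push forward from the incidence variety $\tilde Z\subseteq (Sp_{2n}/B)\times \mathrm{IG}(k,F_q)$ parametrizing pairs $(E_\bullet,L)$ with $L\subseteq E_p\cap F_q$, where $\mathrm{IG}(k,F_q)$ denotes the appropriate isotropic Grassmannian parametrizing isotropic $k$-planes inside $F_q$ (when $q>0$, so $F_q$ is isotropic) or simply $\mathrm{Gr}(k,F_q)$ when $F_q$ is not constrained to be isotropic. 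The fibers of the second projection are symplectic flag varieties of smaller rank, so $\tilde Z$ is smooth and irreducible; then a dimension count yields codimension $(p+q-1)k+\binom{k}{2}$, $pk+\binom{k}{2}-\binom{\bar q+1}{2}$, or $(p+q+k-1)k$ in the three cases, matching the length tables in \S\ref{ss.basic}. Since $Z_{k,p,q}$ contains $\Omega_{w(k,p,q)}$ as a closed subset and has matching codimension, $Z_{k,p,q}=\Omega_{w(k,p,q)}$, giving the characterization: $C(k,p,q)$ holds on $\Omega_w$ iff $w\geq w(k,p,q)$.

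Given this, the definition of $\Omega_w$ in the type C setting together with Theorem~\ref{t.essential-b}(i) shows $\Omega_w = \bigcap_{(k,p,q)\in\Ess(w)} Z_{k,p,q}$, because the intersection on the right is $\Omega_{\sup\Ess(w)} = \Omega_w$. Minimality then follows from Theorem~\ref{t.essential-b}(ii): for any $(k_0,p_0,q_0)\in\Ess(w)$, the element $w'$ produced there satisfies all conditions of $\Ess(w)\setminus\{(k_0,p_0,q_0)\}$ but violates the one indexed by $(k_0,p_0,q_0)$, so removing that condition enlarges the locus.

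The main obstacle I anticipate is simply bookkeeping in the resolution, especially in the regime $k>\bar q>0$ where the isotropic and non-isotropic parts of $F_q$ interact: one must use $F_q^\perp/F_q$ as a symplectic vector space and stratify by how $L\cap F_q^{\perp}$ sits, which is precisely the reason the length formula jumps by $\binom{\bar q+1}{2}$. Verifying that the resolution $\tilde Z$ is still smooth and irreducible of the correct dimension in this middle case, so that the pushforward is a single Schubert variety rather than a union, is the only substantive geometric check; the $q>0$ and $\bar q\geq k$ cases are direct analogues of what was done for type B. All other ingredients---the definition of $\Ess(w)$, the characterization of basic signed permutations, and the Bruhat-order results---are unchanged from the body of the paper.
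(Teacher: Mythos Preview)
Your proposal is correct and matches the paper's approach exactly. The paper does not give an explicit proof of this theorem; it merely states that ``all the results and proofs can easily be reformulated'' from the type B case, and your plan---re-verifying the type C analogue of Lemma~\ref{l.unique} via the same resolution-of-singularities argument, then invoking Theorem~\ref{t.essential-b} (which is intrinsic to $W_n$)---is precisely that reformulation.

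One minor remark: when $q>0$ the space $F_q$ is already isotropic, so every $k$-plane in $F_q$ is automatically isotropic and your $\mathrm{IG}(k,F_q)$ coincides with the ordinary $\mathrm{Gr}(k,F_q)$, just as in the paper's type B argument; there is no need to distinguish the two in that case.
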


The statement for degeneracy loci is as follows.  Let $V$ be an even-rank vector bundle on a variety $X$, equipped with a symplectic form and two isotropic flags $V \supset E_1 \supset E_2 \supset \cdots$ and $V\supset F_1 \supset F_2 \supset \cdots$; set $F_{\bar{q}} = F_{q+1}^\perp$.  The degeneracy locus $\Omega_w \subseteq X$ associated to a signed permutation is defined by the same rank conditions as the Schubert variety.

\begin{corollary}
The rank conditions $\dim(E_p\cap F_q) \geq k$, for $(k,p,q)$ in $\Ess(w)$, suffice to define the symplectic degeneracy locus $\Omega_w\subseteq X$, and this is a minimal list of rank conditions.
\end{corollary}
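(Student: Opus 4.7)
The plan is to reduce both assertions---sufficiency and minimality---to the Schubert variety case already established, via the standard fact that the symplectic flag variety $Sp_{2n}/B$ is a classifying space for the isotropic flag-pair data on $X$.

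For sufficiency, I would argue as follows. Locally on $X$ (in the étale or analytic topology, say), one may trivialize the symplectic vector bundle $(V,\omega)$ and the reference isotropic flag $F_\bullet$, identifying them with the standard model. The second isotropic flag $E_\bullet$ is then classified by a morphism $\varphi \colon U \to Sp_{2n}/B$. The universal Schubert variety $\Omega_w^{\mathrm{univ}} \subseteq Sp_{2n}/B$ cut out by \emph{all} rank conditions pulls back under $\varphi$ to $\Omega_w \cap U$, and likewise the subscheme cut out by just the essential conditions pulls back to its counterpart in $U$. By the preceding theorem, these two subschemes of $Sp_{2n}/B$ already coincide, so their pullbacks coincide on $U$. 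Gluing over an étale (or Zariski) cover gives equality globally on $X$.

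For minimality, fix $(k_0,p_0,q_0) \in \Ess(w)$. By Theorem~\ref{t.essential-b}(ii), there exists $w' \in W_n$ such that $r_{w'}(p_0,q_0) < k_0$ while $r_{w'}(p,q) \geq k$ for every other $(k,p,q) \in \Ess(w)$. Taking $X = Sp_{2n}/B$ itself as the universal example (with $E_\bullet$ tautological and $F_\bullet$ fixed), any point of the Schubert cell $\Omega_{w'}^\circ$ satisfies all essential rank conditions for $w$ \emph{except} the one at $(k_0,p_0,q_0)$, yet does not lie in $\Omega_w$. Hence the rank condition at $(k_0,p_0,q_0)$ is not implied by the others in any universal sense, so no proper subset of $\Ess(w)$ defines $\Omega_w$ for all flag-pair data.

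The only real obstacle is verifying the local trivialization step for sufficiency; this amounts to the standard remark that any symplectic bundle with an isotropic flag is étale-locally isomorphic to the constant model, so that the classifying morphism to $Sp_{2n}/B$ exists and pulls back rank subschemes to rank subschemes. Once that is in place, both parts of the corollary follow directly from the Schubert variety theorem and from Theorem~\ref{t.essential-b}(ii).
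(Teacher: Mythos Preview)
Your proposal is correct and matches the paper's approach: the paper gives no explicit proof of this corollary, treating it as immediate from the preceding theorem on Schubert varieties in $Sp_{2n}/B$ (and, as remarked in the introduction, the Schubert case ``suffices to prove the general degeneracy locus statement''). You have simply spelled out the standard reduction---local trivialization and classifying map for sufficiency, and the universal example $X=Sp_{2n}/B$ together with Theorem~\ref{t.essential-b}(ii) for minimality---that the paper leaves implicit.
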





\end{document}